\DeclareMathOperator*{\argmax}{arg\,max}
\DeclareMathOperator*{\argmin}{arg\,min}
\newtheorem{thm}{Theorem}[section]
\newtheorem {asp}{Assumption}[section]
\newtheorem{lm}{Lemma}[section]
\newtheorem{rmk}{Remark}[section]
\newtheorem{prop}{Proposition}[section]
\theoremstyle{definition}
\theoremstyle{remark}
\numberwithin{equation}{section}
\newcommand{\eps}{\varepsilon}
\newcommand{\E}{\mathbb{E}}
\newcommand{\PP}{\mathbb{P}}
\newcommand{\R}{\mathbb{R}}
\numberwithin{equation}{section}
\newcommand{\bed}{\begin{displaymath}}
\newcommand{\eed}{\end{displaymath}}
\newcommand{\bea}{\bed\begin{array}{rl}}
\newcommand{\eea}{\end{array}\eed}
\newcommand{\barray}{\begin{array}{ll}}
\newcommand{\earray}{\end{array}}
\def\bar{\overline}
\def\hat{\widehat}
\def\a.s{\text{\;a.s.\;}}
\begin{document}
\title[Optimal sustainable harvesting]{Optimal sustainable harvesting of populations in random environments}

\author[L. H. R. Alvarez E.]{Luis H. R. Alvarez E.}
\address{Department of Accounting and Finance\\
 Turku School of Economics\\
 FIN-20014 University of Turku\\
Finland}
 \email{lhralv@utu.fi}

\author[A. Hening]{Alexandru Hening}
\address{Department of Mathematics\\
Tufts University\\
Bromfield-Pearson Hall\\
503 Boston Avenue\\
Medford, MA 02155\\
United States
}
\email{alexandru.hening@tufts.edu}

\keywords {Ergodic control; stochastic harvesting; ergodicity; stochastic logistic model; stochastic environment}
\subjclass[2010]{92D25, 60J70, 60J60}

\maketitle

\begin{abstract}
We study the optimal sustainable harvesting of a population that lives in a random environment. The novelty of our setting is that we maximize the asymptotic harvesting yield, both in an expected value and almost sure sense, for a large class of harvesting strategies and unstructured population models. We prove under relatively weak assumptions that there exists a unique optimal harvesting strategy characterized by an optimal threshold below which the population is maintained at all times by utilizing a local time push-type policy. We also discuss, through Abelian limits, how our results are related to the optimal harvesting strategies when one maximizes the expected cumulative present value of the harvesting yield and establish a simple connection and ordering between the values and optimal boundaries. Finally, we explicitly characterize the optimal harvesting strategies in two different cases, one of which is the celebrated stochastic Verhulst Pearl logistic model of population growth.
\end{abstract}

\tableofcontents

\section{Introduction}\label{s:intro}

When trying to establish the best harvesting policy of a certain species, one needs to take into account both the biological and economic implications. It is well known that overharvesting might lead to the extinction of whole populations (see \cite{Cla10, G71, P06, LES95}). Many species of animals (birds, mammals, and fish) are endangered because of unrestricted harvesting or hunting. In some instances people have overestimated the population density of a certain species, and since it takes a while for a harvested population to recover to previous levels, this has led to either local or global extinctions. However, if we underharvest a species, this can lead to the loss of valuable resources. We are therefore presented with a conundrum: should we overharvest and gain economically but possibly drive a species extinct or should we underharvest to make sure extinction is less likely but lose precious resources? We present a model and a harvesting method which give us, based on a rigorous mathematical analysis, the best possible {\em sustainable harvesting policy that does not drive the species extinct}.

We study a population whose dynamics is continuous in time and that is affected by both biotic (competition) and abiotic (rainfall, temperature, resource availability) factors. Since the abiotic factors are affected by random disturbances, we look at a model that has environmental stochasticity. This transforms a system that is modeled by an ordinary differential equation (ODE) into a system that is modeled by a stochastic differential equation (SDE). We refer the reader to \cite{T77} for a thorough discussion of environmental stochasticity.

We build on the results from \cite{AS98, Al01} and \cite{HNUW18}. Suppose that in an infinitesimal time $dt$ we harvest a quantity $dZ_t$ where $(Z_t)_{t\geq 0}$ is any adapted, non-negative, non-decreasing, and right continuous process. We determine the \textit{optimal harvesting strategy} maximizing the \textit{expected average asymptotic yield}
$$
\ell=\liminf_{T\rightarrow\infty} \mathbb{E}_x\frac{1}{T}\int_0^T\,dZ_t=\liminf_{T\rightarrow\infty} \frac{\mathbb{E}_x Z_T}{T}
$$
of harvested individuals. As in \cite{HNUW18}, and in contrast to what happens in a significant part of the literature (see \cite{LES94,LES95, AS98, LO97}), the optimal strategy will be such that the population is never depleted and cannot be harvested to extinction. This is clear since if $Z_T\rightarrow 0$ in some sense then $\ell=0$ in the above equation.  Our main result is that the optimal harvesting strategy is of the local time reflection type: the population is kept in the interval $(0,b^\ast]$ at all times by first harvesting $(x-b^\ast)^+$ and then harvesting only when the population hits the boundary just enough to maintain the population density below $b^\ast$. This result was conjectured in \cite{HNUW18} where the authors showed that if the harvesting rate is bounded the optimal strategy is of bang-bang type i.e. there is a threshold $x^*>0$ such that if the population size is under $x^*$ there is no harvesting while if the population size is above $x^*$ we harvest according to the maximal rate $M>0$. If one works with discounted yields like in \cite{AS98}, then interestingly the optimal harvesting strategy is also of this local time reflection type. In our setting the diffusion governing the unharvested population is much more general than the one from \cite{AS98} and \cite{HNUW18} where the authors mostly work with a stochastic Verhulst-Pearl diffusion or its generalization. In the current paper we present a unifying result that encompasses a large variety of stochastic models.

Another advantage of our framework is that it does not depend on parameters that are hard to be quantified empirically. Many papers from the literature (see \cite{AS98}) work with a time discounted yield in order to capture the opportunity cost of capital. However, it is a difficult question to come up with a good value for the discount factors (see \cite{dreze1987theory}). Moreover, as \cite{LES94} state in their influential paper focusing on the relationship between discounting and extinction risk:
\begin{quote}``Thus, even when the discount rate is less than the critical value predicted by deterministic models, the economically optimal strategy will often be immediate harvesting to extinction. These results make a powerful a argument that, for the common good, economic discounting should be avoided in the development of optimal strategies for sustainable use of biological resources."
\end{quote}

Our model does not involve any discount factors. We generalize the setting of \cite{HNUW18} where the authors assumed the harvesting rate was bounded by some parameter $M>0$. This corresponds to having total control over the harvested population. Moreover, it also side-steps needing to know the parameter $M>0$ which could be hard to estimate realistically.

The paper is organized as follows. In Section \ref{s:model} we introduce the model and prove the main results. Section \ref{s:discount} showcases how our model relates to the discounted model from \cite{AS98,Al01}. In particular we show that by letting the discount rate go to zero, $r\downarrow 0$, we can recover in a sense the results of this paper. In Section \ref{s:applications} we look at two explicit applications of our results. As a first application we look at the Verhulst-Pearl diffusion model studied in \cite{AS98,HNUW18}. The second model we analyze is the one studied in \cite{A00,LO97}. Finally, Section \ref{s:discussion} is dedicated to a discussion of our results.

\section{Model and Results}\label{s:model}
We consider a population whose density $X_{t}$ at time $t\geq 0$ follows, in the absence of harvesting, the stochastic differential equation (SDE)
\begin{equation}\label{e:1d}
d X_{t} =  X_{t}\mu(X_{t})\,dt + \sigma  (X_{t})\,dB_{t},
\end{equation}
where $(B_{t})_{t\geq 0}$ is a standard one dimensional Brownian motion. This describes a population $X$ with per-capita growth rate given by $\mu(x)>0$ and infinitesimal variance of fluctuations in the per-capita growth rate given by $\sigma^2(x)/x^2$ when the density is $X_{t}=x$.
We make the following standing assumption throughout the paper.
\begin{asp}\label{A:ES}
The functions $\mu,\sigma:(0,\infty)\rightarrow \R$ are continuous and satisfy the \textit{Engelbert-Schmidt} conditions:
$$
\sigma(x)>0~\text{and}~\exists \varepsilon>0 ~\text{s.t.}~ \int_{x-\varepsilon}^{x+\varepsilon} \frac{1+|y\mu(y)|}{\sigma^2(y)}\,dy<\infty ~\text{for any}~x\in(0,\infty).
$$
\end{asp}
These conditions ensure the existence and uniqueness of weak solutions to \eqref{e:1d} (see for example \cite{ES91}). In addition, we want the population to persist in the absence of harvesting and to not explode to infinity (which would be absurd from a biological point of view). To this end we will assume throughout our analysis that the boundaries of the state space of the population density are unattainable (i.e. either natural or entrance) for $X$ in the absence of harvesting. This means that even though the process may tend towards a boundary it will never attain it in finite time. We refer the reader to Section II.6 from \cite{BS15} for a thorough discussion of the boundary classification of one-dimensional diffusions.

We denote the density of the scale function of $X$ by
\begin{equation}\label{e:scale}
S'(x)=\exp\left(-\int_c^x\frac{2\mu(y)y}{\sigma^2(y)}\,dy\right),
\end{equation}
where $c\in \mathbb{R}_+$ is an arbitrary constant. The density of the speed measure $m$ is, in turn, denoted by
\begin{equation}\label{e:speed}
m'(x)=\frac{2}{\sigma^2(x)S'(x)}.
\end{equation}
The second order differential operator
\begin{equation}\label{e:generator}
\mathcal{A}:=\frac{1}{2}\sigma^2(x)\frac{d^2}{dx^2}+\mu(x)x\frac{d}{dx}=\frac{1}{2}\frac{d}{dm} \frac{d}{dS}
\end{equation}
is the infinitesimal generator of the underlying diffusion $X$.

For most harvesting applications it is sufficient to make the following assumption.


\begin{asp}\label{A:suff} ~
\newline
\begin{itemize}
\item[(A1)] The function $\mu$ is nonincreasing and fulfills the limiting conditions $\lim_{x\rightarrow 0+}\mu(x) > \eta$ and $\lim_{x\rightarrow\infty}\mu(x)<-\eta$ for some $\eta >0$.
\item[(A2)] The function $\mu(x)x$ has a unique maximum point $\hat{x}=\argmax\{\mu(x)x\}$ so that $\mu(x)x$ is increasing on $(0,\hat{x})$
and decreasing on $(\hat{x},\infty)$.
\item[(A3)] $\lim_{x\rightarrow 0+}m((x,y))<\infty$ for $x< y$.
\end{itemize}
\end{asp}
\begin{rmk}
It is worth pointing out that assumption (A1) guarantees that the per-capita growth rate $\mu$ vanishes at some given point $x_0=\mu^{-1}(0)$. In typical population models this point coincides with the carrying capacity of the population. We naturally have that $\hat{x}<x_0$.

Condition (A3) is needed for the existence of a stationary distribution for the process $X$. Under our boundary assumptions, it guarantees that $0$ is repelling for $X$ and the condition $\lim_{x\rightarrow 0+}S((x,y))=+\infty$, for $x< y$, is satisfied (cf. p. 234 in \cite{KaTa81}).
\end{rmk}

A stochastic process $(Z_t)_{t\geq 0}$ taking values in $[0,\infty)$
is said to be an \textit{admissible harvesting strategy} if
$(Z_t)_{t\geq 0}$ is non-negative, nondecreasing, right continuous, and adapted to the filtration $(\mathcal{F}_t)_{t\geq 0}$ generated by the driving  Brownian motion $(B_{t})_{t\geq 0}$. We denote the class of all admissible harvesting strategies (or controls) by $\Lambda$. Assume that $(Z_t)_{t\geq 0}\in \Lambda$ and that at time $t$ we harvest in the infinitesimal period $dt$ an amount $dZ_t$. Then our harvested population's dynamics is given by
\begin{equation}\label{e:harvested}
d X^{Z}_{t} =  X^{Z}_{t}\mu(X^{Z}_{t})\,dt + \sigma(X^{Z}_{t})\,dB_{t} -dZ_t, ~X^Z_0=x>0.
\end{equation}
We consider the following ergodic singular control problem:
\begin{equation}\label{e:max_control}
\sup_{Z\in \Lambda} \liminf_{T\rightarrow\infty} \frac{1}{T}\mathbb{E}_x\int_0^TdZ_s.
\end{equation}
We are interested (as in \cite{HNUW18}) in the maximization of the \textit{expected asymptotic harvesting yield} (also called the \textit{expected average cumulative yield})  of the population.

Before presenting our main findings on the optimal ergodic harvesting strategy and the maximal expected average cumulative yield we first establish the following
auxiliary verification lemma.
\begin{lm}\label{aux1}
Let $\ell$ be a given positive constant and assume that $v:\mathbb{R}_+\mapsto \mathbb{R}_+$ is a twice continuously differentiable function
satisfying the inequalities $v'(x)\geq 1$ and $(\mathcal{A}v)(x)\leq \ell$ for all $x\in \mathbb{R}_+$. Then
$$
\liminf_{T\rightarrow\infty} \frac{1}{T}\mathbb{E}_x\int_0^TdZ_s\leq \ell
$$
for all $Z\in \Lambda$.
\end{lm}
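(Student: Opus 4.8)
The natural approach is via Dynkin's formula (Itô's lemma) applied to $v(X^Z_t)$, combined with the two stated inequalities on $v$. First I would apply Itô's formula to the process $v(X^Z_t)$, where $X^Z$ solves the harvested SDE \eqref{e:harvested}; one has to be a little careful because $Z$ is only of bounded variation (not necessarily continuous), so the correct form involves the continuous part $Z^c$ of $Z$ plus a sum over jumps. Writing things out,
\[
v(X^Z_T) = v(x) + \int_0^T (\mathcal{A}v)(X^Z_s)\,ds + \int_0^T \sigma(X^Z_s)v'(X^Z_s)\,dB_s - \int_0^T v'(X^Z_{s-})\,dZ^c_s + \sum_{0\le s\le T}\big(v(X^Z_s)-v(X^Z_{s-})\big).
\]
Since at a jump time $X^Z_s = X^Z_{s-} - \Delta Z_s$ and $v'\ge 1$ (so $v$ is nondecreasing with slope at least one), the mean value theorem gives $v(X^Z_{s-})-v(X^Z_s) \ge \Delta Z_s$; together with $v'\ge 1$ on the continuous part this yields $\int_0^T v'(X^Z_{s-})\,dZ^c_s + \sum_{s\le T}(v(X^Z_{s-})-v(X^Z_s)) \ge Z_T$. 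Using also $(\mathcal{A}v)(x)\le \ell$, we obtain
\[
v(X^Z_T) \le v(x) + \ell T + \int_0^T \sigma(X^Z_s)v'(X^Z_s)\,dB_s - Z_T.
\]

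**Handling the stochastic integral.** The stochastic integral $\int_0^T \sigma(X^Z_s)v'(X^Z_s)\,dB_s$ need not be a true martingale, only a local martingale, so I would introduce a localizing sequence of stopping times $\tau_n\uparrow\infty$ (for instance exit times of $X^Z$ from compact subsets of the state space, intersected with $n$) so that the stopped integral has zero expectation. Taking expectations at time $T\wedge\tau_n$, rearranging, and using $v\ge 0$ gives
\[
\mathbb{E}_x Z_{T\wedge\tau_n} \le v(x) + \ell\,\mathbb{E}_x(T\wedge\tau_n) \le v(x) + \ell T.
\]
Then I would let $n\to\infty$: since $Z$ is nonnegative and nondecreasing, $Z_{T\wedge\tau_n}\uparrow Z_T$ (using that $\tau_n\to\infty$ a.s. — here one needs the boundaries to be unattainable so $X^Z$ does not explode and the localizing sequence indeed diverges), and monotone convergence yields $\mathbb{E}_x Z_T \le v(x) + \ell T$. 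Dividing by $T$ and taking $\liminf_{T\to\infty}$ kills the $v(x)/T$ term and leaves $\liminf_{T\to\infty}\frac{1}{T}\mathbb{E}_x Z_T \le \ell$, which is the claim since $\int_0^T dZ_s = Z_T - Z_0 = Z_T$ (as $X^Z_0 = x$ with no instantaneous harvest assumed at $0$, or absorbing $Z_0$ into the bound harmlessly).

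**Main obstacle.** The routine parts are the Itô expansion and the jump estimate; the genuinely delicate point is the localization argument — justifying that one can choose stopping times $\tau_n$ with $\tau_n\uparrow\infty$ almost surely along which the local martingale part is a genuine martingale, \emph{while simultaneously} the harvested diffusion $X^Z$ stays in a region where $\sigma v'$ is bounded. Because $Z$ is an arbitrary admissible control, $X^Z$ could in principle approach the (unattainable) boundaries; the unattainability of the boundaries for the unharvested process, plus the fact that harvesting only pushes $X^Z$ downward, should guarantee $X^Z$ cannot reach $+\infty$ in finite time, and near $0$ one can use that $v\ge 0$ to simply drop that boundary term rather than control it. I would also double-check the jump inequality direction carefully — harvesting causes downward jumps in $X^Z$, hence downward jumps in $v(X^Z)$, and it is precisely the magnitude of these (bounded below by $\Delta Z_s$ via $v'\ge 1$) that must dominate the harvested amount. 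This sign bookkeeping, together with the monotone convergence step requiring $\tau_n\to\infty$, is where the real care is needed.
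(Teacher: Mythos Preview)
Your proposal is correct and follows essentially the same route as the paper: apply the generalized It\^o--D\"oblin formula to $v(X^Z)$, use $v'\ge 1$ to bound the continuous and jump contributions of $Z$ from below by $Z_T$, use $(\mathcal{A}v)\le \ell$ on the drift part, localize via stopping times, take expectations (killing the stochastic integral), and then let the localizing parameter and $T$ tend to infinity. The only cosmetic differences are that the paper localizes one-sidedly with $T_n=T\wedge\inf\{t:X_t^Z\ge n\}$ and invokes Fatou's lemma for the passage $n\to\infty$, whereas you propose two-sided exit times and monotone convergence; your more careful discussion of why $\tau_n\uparrow\infty$ and of the jump bookkeeping is in fact more explicit than what the paper writes.
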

\begin{proof}
Applying the generalized It{\^o}-D{\"o}blin (see \cite{Harrison85}) change of variable formula to the nonnegative function $v$ yields
\begin{align*}
0\leq v(X_{T_n}^Z) = v(x) &+ \int_0^{T_n}(\mathcal{A}v)(X_s^Z)ds + \int_0^{T_n}\sigma(X_s^Z)v'(X_s^Z)dB_s\\
 &-\int_0^{T_n}v'(X_s^Z)dZ_s^c + \sum_{s\leq T_n}(v(X_s^Z)-v(X_{s-}^Z)),
\end{align*}
where $Z^c$ denotes the continuous part of an arbitrary admissible harvesting strategy $Z\in \Lambda$, $T>0$, and $T_n = T\wedge \inf\{t\geq 0:X_t^Z\geq n\}$ is a sequence of finite stopping times converging to $T$ as $n\rightarrow\infty$. Reordering terms and taking expectations shows that
\begin{align*}
\mathbb{E}_x\int_0^{T_n}v'(X_s^Z)dZ_s^c + \mathbb{E}_x\sum_{s\leq T_n}\int_{X_{s}^Z}^{X_{s-}^Z}v'(y)dy \leq v(x) + \mathbb{E}_x\int_0^{T_n}(\mathcal{A}v)(X_s^Z)ds.
\end{align*}
Imposing now the inequalities $v'(x)\geq 1$ and $(\mathcal{A}v)(x)\leq \ell$ demonstrates that
\begin{align*}
\mathbb{E}_x\int_0^{T_n}dZ_s \leq v(x) + \ell T_n.
\end{align*}
Letting $n\rightarrow \infty$ and applying Fatou's lemma then finally yields
\begin{align*}
\mathbb{E}_x\int_0^{T}dZ_s \leq v(x) + \ell T
\end{align*}
from which the alleged results follow.
\end{proof}

It is natural to ask if there is a function $v$ and a constant $\ell^\ast$ satisfying the conditions of Lemma \ref{aux1}. In order to show that the answer to
this question is positive, we now follow the seminal paper \cite{Kar83} and investigate the following question:
can we find two constants $\ell^\ast , b^\ast$ and a twice continuously differentiable function $u(x)$ satisfying the conditions
\begin{align}
\begin{split}\label{freebdry}
\lim_{x\downarrow 0}\frac{u'(x)}{S'(x)} &=0\\
(\mathcal{A}u)(x) &=\ell^\ast,\quad x\in(0,b^\ast),\\
u'(x)&=1,\quad x\geq b^\ast.
\end{split}
\end{align}
Using \eqref{e:generator} we get
$$
\frac{d}{dx}\left(\frac{u'(x)}{S'(x)}\right)=(\mathcal{A}u)(x)m'(x)=\ell^\ast m'(x), ~x\in(0,b^*).
$$
The last equality, together with the boundary condition $\lim_{x\downarrow 0}u'(x)/S'(x)=0$, yields that
\begin{equation}\label{e:u}
\begin{aligned}
u'(x)
&= \begin{cases}
\ell^\ast S'(x)m((0,x)), & \mbox{if $x\in(0,b^*)$}\\
1 & \mbox{if $x\geq b^*$}
\end{cases}
\end{aligned}
\end{equation}

Invoking the twice continuous differentiability of $u$ across the boundary $b^\ast$, and noting that $u''(b^*)=0$ then shows that the constants $\ell^\ast , b^\ast$ are the solutions of the system
\begin{align}\label{opt1}
\ell^\ast = \mu(b^\ast)b^\ast = \frac{1}{S'(b^\ast)m((0,b^\ast))}.
\end{align}
We can now establish the following.
\begin{lm}\label{aux2}
The optimality conditions \eqref{opt1} have a unique solution and for all $Z\in \Lambda$
$$
\liminf_{T\rightarrow\infty} \frac{1}{T}\mathbb{E}_x\int_0^TdZ_s\leq \ell^\ast = \sup_{b>0}\left\{\frac{1}{S'(b)m((0,b))}\right\}=\frac{1}{S'(b^*)m((0,b^*))}
$$
where $b^*$ is the unique zero of
$$
f(x) = \int_0^x(\mu(y)y-\mu(x)x)m'(y)dy.
$$
Futhermore, the function $u$ defined by \eqref{freebdry} satisfies the conditions of Lemma \ref{aux1}.
\end{lm}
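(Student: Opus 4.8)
The plan is to reduce the whole statement to the study of the two functions
\[
g(b):=\frac{1}{S'(b)\,m((0,b))},\qquad f(x):=\int_0^x\big(\mu(y)y-\mu(x)x\big)\,m'(y)\,dy ,
\]
proving that $g$ has a unique maximiser which is simultaneously the unique zero of $f$ and the unique solution of \eqref{opt1}; the upper bound then follows by feeding the function $u$ of \eqref{freebdry} into Lemma \ref{aux1}. The decisive first step is the computation of $f'$: differentiating, the Leibniz boundary term at $y=x$ vanishes because the integrand is then zero, leaving
\[
f'(x)=-\big(\mu(x)x\big)'\,m((0,x)).
\]
By (A2) the factor $(\mu(x)x)'$ is positive on $(0,\hat x)$ and negative on $(\hat x,\infty)$, while $m((0,x))>0$, so $f$ is strictly decreasing on $(0,\hat x)$ and strictly increasing on $(\hat x,\infty)$. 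One checks $f(0+)=0$ ($\mu(y)y$ is bounded near $0$ and, by (A3), $m((0,x))\downarrow 0$), while by (A1) one has $\mu(x)x\le-\eta x\to-\infty$, so, using $m((0,x))\ge m((0,1))>0$, $f(x)\to+\infty$ as $x\to\infty$. Hence $f$ leaves $0$, is strictly negative on $(0,\hat x]$, and then increases through $0$ to $+\infty$, so it has exactly one zero $b^\ast\in(\hat x,\infty)$, with $f<0$ on $(0,b^\ast)$ and $f>0$ on $(b^\ast,\infty)$.

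Second, I would link $f$ with $g$. From \eqref{e:scale}--\eqref{e:speed} one has $\frac{d}{dx}(1/S'(x))=\mu(x)x\,m'(x)$; since $1/S'$ is monotone near $0$ (it is $\exp$ of an integral whose integrand is positive near $0$, as $\mu(0+)>0$) and $S((0,y))=+\infty$ (guaranteed under our boundary hypotheses and (A3)), we get $S'(0+)=+\infty$, i.e. $1/S'(0+)=0$, whence $\int_0^x\mu(y)y\,m'(y)\,dy=1/S'(x)$ and
\[
f(x)=\frac{1}{S'(x)}-\mu(x)x\,m((0,x))=m((0,x))\big(g(x)-\mu(x)x\big).
\]
A direct differentiation of $g$, using $S''=-(2\mu x/\sigma^2)S'$ and $m'=2/(\sigma^2S')$, shows the sign of $g'(x)$ equals that of $\mu(x)x-g(x)$, i.e. the opposite of the sign of $f(x)$. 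Combined with the first step, $g$ is strictly increasing on $(0,b^\ast)$ and strictly decreasing on $(b^\ast,\infty)$, so $b^\ast$ is its unique maximiser and $\ell^\ast:=g(b^\ast)=\sup_{b>0}g(b)$; and $f(b^\ast)=0$ reads $\ell^\ast=g(b^\ast)=\mu(b^\ast)b^\ast$, which is precisely \eqref{opt1}. Conversely any solution $(\ell,b)$ of \eqref{opt1} satisfies $\mu(b)b=g(b)$, i.e. $f(b)=0$, hence $b=b^\ast$; this gives uniqueness. (Note $\ell^\ast=g(b^\ast)>0$, which forces $\mu(b^\ast)>0$ and $b^\ast\in(\hat x,\mu^{-1}(0))$.)

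Third, I would verify the hypotheses of Lemma \ref{aux1} for $u$, which by \eqref{e:u} has $u'(x)=\ell^\ast S'(x)m((0,x))=\ell^\ast/g(x)$ on $(0,b^\ast)$ and $u'(x)=1$ on $[b^\ast,\infty)$. Since $g$ increases on $(0,b^\ast)$, $u'(x)=g(b^\ast)/g(x)\ge 1$ throughout. On $(0,b^\ast)$, $\mathcal{A}u=\ell^\ast$ by the construction of $u$; on $[b^\ast,\infty)$, $u''=0$ and hence $\mathcal{A}u(x)=\mu(x)x\le\mu(b^\ast)b^\ast=\ell^\ast$, because $\mu(x)x$ decreases on $(\hat x,\infty)\ni b^\ast$; so $\mathcal{A}u\le\ell^\ast$ on all of $\mathbb{R}_+$. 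Finally $u'$ is $C^1$ on $(0,b^\ast)$ and on $(b^\ast,\infty)$, $u'(b^\ast-)=1=u'(b^\ast+)$, and the relation \eqref{opt1} is exactly what forces $u''(b^\ast-)=0=u''(b^\ast+)$, so $u\in C^2$; Lemma \ref{aux1} then yields $\liminf_{T\to\infty}\frac1T\mathbb{E}_x\int_0^T dZ_s\le\ell^\ast$ for every $Z\in\Lambda$.

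The step I expect to require the most care is this last one, because under the standing assumptions one generically has $u(0+)=-\infty$ (for instance in the Verhulst--Pearl case $u'(x)$ behaves like a constant multiple of $1/x$ near $0$), so $u$ is not literally the non-negative function demanded by Lemma \ref{aux1}. This should be harmless: one replaces $u$ on a small interval $(0,x_1)$ by a non-negative $C^2$ function that still obeys $u'\ge1$ and $\mathcal{A}u\le\ell^\ast$ --- possible because near $0$ one can keep $u'$ bounded (even constant), so that $u''=0$ and $\mathcal{A}u=\mu(x)x\,u'\to0<\ell^\ast$ --- or, equivalently, one observes that strategies driving the population to extinction have asymptotic average $0\le\ell^\ast$ and restricts attention to the rest, along which the trajectories stay away from the region where $u$ is unbounded below. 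Making this boundary adjustment precise is the one genuine subtlety.
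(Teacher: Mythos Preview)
Your overall architecture matches the paper's exactly: study $f$, show it has a unique zero $b^\ast$, identify $b^\ast$ as the maximiser of $g(b)=1/(S'(b)m((0,b)))$, and then feed $u$ into Lemma~\ref{aux1}. The linkage $f(x)=m((0,x))\big(g(x)-\mu(x)x\big)$ and the sign analysis of $g'$ via $f$ are precisely what the paper does.

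There is, however, one genuine gap. Your monotonicity argument for $f$ rests on the formula $f'(x)=-\big(\mu(x)x\big)'\,m((0,x))$, which presupposes that $\mu(x)x$ is differentiable. Under the standing hypotheses $\mu$ is only continuous (Assumption~\ref{A:ES}), and (A2) merely asserts that $\mu(x)x$ is increasing on $(0,\hat x)$ and decreasing on $(\hat x,\infty)$. The paper sidesteps this by a finite-difference computation:
\[
f(y)-f(x)=\int_x^y\big(\mu(t)t-\mu(y)y\big)m'(t)\,dt+\big(\mu(x)x-\mu(y)y\big)\,m((0,x)),
\]
from which the monotonicity on $(0,\hat x)$ and on $(\hat x,\infty)$ follows directly from (A2), with no derivative of $\mu$ needed. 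Your argument can be repaired in exactly this way, but as written it assumes more regularity than is available.

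On your last paragraph: the observation that $u$ is typically unbounded below near $0$ is correct, and the paper itself simply invokes Lemma~\ref{aux1} without addressing it. But your proposed patch---extending $u'$ as a constant on $(0,x_1)$---does not work as stated: to keep $u\in C^1$ you must take that constant equal to $u'(x_1)=\ell^\ast/g(x_1)$, and since $f(x_1)<0$ gives $g(x_1)<\mu(x_1)x_1$, you get $\mathcal{A}u(x_1)=\mu(x_1)x_1\cdot \ell^\ast/g(x_1)>\ell^\ast$ at the matching point (and $u''$ jumps there, so $C^2$ also fails). A cleaner route is to leave $u$ alone and instead localise in Lemma~\ref{aux1} from below as well as from above, i.e.\ use $T_n=T\wedge\inf\{t:X_t^Z\notin(1/n,n)\}$; the rest of the argument then goes through.
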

\begin{proof}
We first show that the optimality conditions \eqref{opt1} have a unique solution under our assumptions.
To this end we investigate the behavior of the continuous function $f:(0,\infty)\rightarrow \R$ defined by
$$
f(x) := \frac{1}{S'(x)}-\mu(x)x m((0,x)).
$$
Making use of Assumption (A3) guarantees that
$$
\int_0^x\mu(y)y m'(y) dy =\int_0^x \left(\frac{1}{S'(z)}\right)'\,dz= \frac{1}{S'(x)} - \lim_{y\rightarrow 0+}\frac{1}{S'(y)}= \frac{1}{S'(x)}.
$$
Therefore, we can express $f(x)$ as
$$
f(x) = \int_0^x(\mu(y)y-\mu(x)x)m'(y)dy.
$$
It is clear that $f(\hat{x})<0$ and
$$
f(x_0)=\int_0^{x_0}\mu(y)ym'(y)dy>0
$$
demonstrating, using the intermediate value theorem, that $f$ has at least one root $b^\ast\in(\hat{x},x_0)$. To prove that the root is unique, we notice that
if $y>x$, then
\begin{align*}
f(y)-f(x) &=\frac{1}{S'(y)}-\mu(y)y m((0,y)) - \left(\frac{1}{S'(x)}-\mu(x)x m((0,x))\right)\\
&=\int_x^y \mu(t)tm'(t)dt-\mu(y)y m((0,y))+\mu(x)x m((0,x))\\
&=\int_x^y (\mu(t)t-\mu(y)y)m'(t)dt + (\mu(x)x-\mu(y)y) m((0,x)).
\end{align*}
Hence, if $x<y\leq \hat{x}$ then $f(y)-f(x) < 0$ proving that $f$ is strictly decreasing on $(0,\hat{x})$. If, in turn,
$\hat{x}\leq x < y$ then $f(y)-f(x) > 0$ proving that $f$ is strictly increasing on $(\hat{x},\infty)$. Combining these observations with
the continuity of $f$ and the fact that $\lim_{x\rightarrow 0+}f(x)=0$ then proves that
the root $b^\ast\in(\hat{x},x_0)$ is unique and, consequently, that a unique pair $\ell^\ast,b^\ast$ exists. Moreover, since
$$
\frac{d}{db}\left[\frac{1}{S'(b)m((0,b))}\right]=\frac{-2f(b)}{\sigma^2(b)S'(b)m^2((0,b))}
$$
we get that
$$
b^\ast = \argmax\left\{\frac{1}{S'(b)m((0,b))}\right\}
$$
and
$$
\ell^\ast=\frac{1}{S'(b^\ast)m((0,b^\ast))}=\mu(b^\ast)b^\ast.
$$

We now prove that the function $u$ satisfies the conditions of Lemma \ref{aux1}.
We first observe that
$(\mathcal{A}u)(x) = \mu(x)x$ for all $x\in[b^\ast,\infty)$. Since $\mu(x)x$ is decreasing on $(\hat{x},\infty)$ and $b^\ast > \hat{x}$ we find that
$(\mathcal{A}u)(x) \leq \ell^\ast = \mu(b^\ast)b^\ast$ for all $x\in\mathbb{R}_+$. On the other hand, since
$$
u''(x) = \frac{2S'(x)\ell^\ast}{\sigma^2(x)} \left(\frac{1}{S'(x)}-\mu(x)xm((0,x))\right) = \frac{2S'(x)\ell^\ast}{\sigma^2(x)}f(x) < 0
$$
for all $x<b^\ast$ and $u'(b^\ast)=1$ we find that $u'(x)\geq 1$ for all $x\in \mathbb{R}_+$. The last alleged claim now follows from Lemma \ref{aux1}.
\end{proof}
\begin{rmk}
Lemma \ref{aux2} also shows that the function $u(x)$ satisfying the considered free boundary value problem is concave on $\mathbb{R}_+$.
This property is later shown to be the principal determinant of the sign of the impact of increased volatility on the optimal harvesting policy and the expected average cumulative yield.
\end{rmk}

Lemma \ref{aux2} essentially shows that if there is an admissible harvesting strategy resulting into a value satisfying the variational inequalities of Lemma \ref{aux1}, then the value of that
policy dominates the value of the maximal expected average cumulative yield. Naturally, if we could determine an admissible policy yielding precisely the value characterized in Lemma \ref{aux2}
then that policy would automatically constitute the optimal harvesting policy. This is accomplished in the following theorem summarizing our main result on the optimal sustainable harvesting policy.
\begin{restatable}{thm}{main}\label{t:main}
Suppose Assumptions \ref{A:ES} and \ref{A:suff} hold and $X^Z_0=x>0$. The optimal harvesting strategy is
\begin{equation}\label{e:optimal_strategy}
\begin{aligned}
Z_t^{b^\ast}
&= \begin{cases}
\left(x-b^\ast\right)^+ & \mbox{if $t=0$,} \\
L(t,b^\ast) & \mbox{if $t>0$}
\end{cases}
\end{aligned}
\end{equation}
where $L(t,b^\ast)$ is the {\em local time push} of the process $X^Z$ at the boundary $b^\ast$ (cf. \cite{Harrison85,Kar83,ShLeGa84}). The optimal harvesting boundary $b^\ast$ as well as
the maximal expected average asymptotic  yield $\ell^\ast$ are the solutions of the optimality conditions
$$
\ell^\ast = \mu(b^\ast)b^\ast = \frac{1}{S'(b^\ast)m((0,b^\ast))}.
$$
Moreover,
$$
\sup_{Z\in \Lambda} \liminf_{T\rightarrow\infty} \frac{1}{T}\mathbb{E}_x\int_0^TdZ_s = \lim_{T\rightarrow\infty} \frac{\mathbb{E}_x[Z_T^{b^\ast}]}{T} = \ell^\ast = \mu(b^\ast)b^\ast.
$$
\end{restatable}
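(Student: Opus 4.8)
The upper bound from Lemma~\ref{aux2} already shows $\sup_{Z\in\Lambda}\liminf_{T\to\infty}\frac1T\mathbb{E}_x\int_0^T dZ_s\le\ell^\ast$, so the whole task is to prove that the reflection policy $Z^{b^\ast}$ is admissible and attains the value $\ell^\ast$. First I would record the structure of the controlled process: harvesting the lump $(x-b^\ast)^+$ at time $0$ and then keeping the density in $(0,b^\ast]$ by pushing it down from $b^\ast$ with the minimal local time makes $X^{Z^{b^\ast}}$ the weak solution of the Skorokhod-reflected SDE associated with \eqref{e:1d} on $(0,b^\ast]$, reflecting instantaneously at $b^\ast$; existence and uniqueness follow from Assumption~\ref{A:ES} exactly as in the uncontrolled case, and $L(\cdot,b^\ast)$ is the corresponding boundary local time, which is nonnegative, continuous, nondecreasing, adapted and increases only on $\{t:X^{Z^{b^\ast}}_t=b^\ast\}$ (cf. \cite{Harrison85,Kar83,ShLeGa84}), so $Z^{b^\ast}\in\Lambda$. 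Since reflection at the top does not alter the generator near $0$, the point $0$ remains unattainable, hence $\tau_n:=\inf\{t\ge 0:X^{Z^{b^\ast}}_t\le 1/n\}\uparrow\infty$ almost surely, and $X^{Z^{b^\ast}}_t\in(0,b^\ast]$ for every $t>0$.

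The crux is to convert the free boundary problem \eqref{freebdry} into an exact representation of the harvested amount. Applying the generalized It{\^o}-D{\"o}blin formula to $u(X^{Z^{b^\ast}}_t)$ and using the defining properties of $u$ — that $(\mathcal{A}u)(x)=\ell^\ast$ on $(0,b^\ast)$ while the set of times spent exactly at $b^\ast$ is Lebesgue-null, that $u'(b^\ast)=1$ so $\int_0^t u'(X^{Z^{b^\ast}}_s)\,dL(s,b^\ast)=L(t,b^\ast)$, and that $u'\equiv 1$ on $[b^\ast,\infty)$ so that the decrement of $u$ caused by the initial lump cancels the $(x-b^\ast)^+$ term — I expect the identity
$$
Z^{b^\ast}_t \;=\; u(x)\;-\;u\!\left(X^{Z^{b^\ast}}_t\right)\;+\;\ell^\ast t\;+\;\int_0^t \sigma\!\left(X^{Z^{b^\ast}}_s\right)u'\!\left(X^{Z^{b^\ast}}_s\right)\,dB_s,\qquad t\ge 0 .
$$

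To take expectations I would localize at $\tau_n$: on $[0,\tau_n]$ the process lies in the compact interval $[1/n,b^\ast]$, on which $\sigma$ and $u'$ are continuous and hence bounded, so the stopped stochastic integral is a mean-zero martingale and $\mathbb{E}_x[Z^{b^\ast}_{t\wedge\tau_n}]=u(x)-\mathbb{E}_x[u(X^{Z^{b^\ast}}_{t\wedge\tau_n})]+\ell^\ast\,\mathbb{E}_x[t\wedge\tau_n]$. Letting $n\to\infty$: $t\wedge\tau_n\uparrow t$ boundedly; $Z^{b^\ast}_{t\wedge\tau_n}\uparrow Z^{b^\ast}_t$ with $\mathbb{E}_x[Z^{b^\ast}_t]<\infty$ for every $t$ — otherwise monotonicity of $t\mapsto Z^{b^\ast}_t$ would give $\liminf_T\mathbb{E}_x[Z^{b^\ast}_T]/T=\infty$, contradicting Lemma~\ref{aux2} applied to $Z^{b^\ast}$ — so monotone convergence identifies the left-hand limit; and since by Lemma~\ref{aux2} the function $u$ is increasing, concave and nonnegative, hence $0\le u\le u(b^\ast)$ on $(0,b^\ast]$, bounded convergence takes care of $\mathbb{E}_x[u(X^{Z^{b^\ast}}_{t\wedge\tau_n})]$. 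This gives, for every $t>0$, $\mathbb{E}_x[Z^{b^\ast}_t]=u(x)-\mathbb{E}_x[u(X^{Z^{b^\ast}}_t)]+\ell^\ast t$; dividing by $t$ and letting $t\to\infty$, the bound $0\le\mathbb{E}_x[u(X^{Z^{b^\ast}}_t)]\le u(b^\ast)$ uniform in $t$ yields $\lim_{T\to\infty}\mathbb{E}_x[Z^{b^\ast}_T]/T=\ell^\ast$. Together with Lemma~\ref{aux2} this proves $Z^{b^\ast}$ optimal with value $\ell^\ast=\mu(b^\ast)b^\ast$, the last equality being part of the optimality conditions \eqref{opt1}, which also furnish and characterize $b^\ast$; uniqueness of the optimizing control can be obtained by inspecting the equality cases in the verification argument of Lemma~\ref{aux1}.

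The step I expect to be the genuine obstacle is the one hidden in the localization: making the generalized It{\^o} formula with a reflecting boundary local time rigorous for \eqref{e:1d}, and controlling $\sigma u'$ and $u$ near $0$, where the coefficients of \eqref{e:1d} need not be bounded. This is exactly where Assumption~(A3), which is the statement $m((0,b^\ast))<\infty$ and, via the remark following Assumption~\ref{A:suff}, makes $u$ well behaved at $0$, and the unattainability of $0$ enter; once the displayed identity is in hand, dividing by $t$ and passing to the Ces{\`a}ro limit is routine.
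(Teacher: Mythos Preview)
Your argument is correct, but it follows a genuinely different route from the paper's own proof. The paper does not apply It\^{o}'s formula to the free-boundary function $u$; instead it applies it to the identity, obtaining directly from \eqref{e:harvested}
\[
Z_T^{b^\ast} = x - X_T^{Z^{b^\ast}} + \int_0^T \mu\!\left(X_t^{Z^{b^\ast}}\right)X_t^{Z^{b^\ast}}\,dt + \int_0^T \sigma\!\left(X_t^{Z^{b^\ast}}\right)\,dB_t,
\]
takes expectations (arguing that $\sigma(X_t^{Z^{b^\ast}})$ is bounded because $X^{Z^{b^\ast}}$ lives in $(0,b^\ast]$), divides by $T$, and then invokes ergodicity of the reflected diffusion with invariant probability $m'/m((0,b^\ast))$ to evaluate $\lim_{T\to\infty}\frac{1}{T}\mathbb{E}_x\!\int_0^T \mu(X_s^{Z^{b^\ast}})X_s^{Z^{b^\ast}}\,ds = \int_0^{b^\ast}\mu(x)x\,\frac{m'(x)}{m((0,b^\ast))}\,dx = \mu(b^\ast)b^\ast$.

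The trade-off is this: your approach, by using $u$ with $(\mathcal{A}u)\equiv\ell^\ast$ on $(0,b^\ast)$, produces the constant $\ell^\ast$ as the coefficient of $t$ immediately, so you never need the ergodic theorem; what you need instead is that $u$ is bounded on $(0,b^\ast]$, which you extract from Lemma~\ref{aux2}. Your localization at $\tau_n$ is also more careful than the paper, which simply asserts that continuity of $\sigma$ makes $\sigma(X_t^{Z^{b^\ast}})$ bounded --- a claim that tacitly requires $\sigma$ to stay bounded as $x\downarrow 0$. The paper's route is shorter and has the pleasant feature of exhibiting $\ell^\ast$ as the stationary mean of $\mu(X)X$, foreshadowing Proposition~\ref{Prob1Limit}; yours is more self-contained and more robust near the lower boundary. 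One small caveat: your closing remark that uniqueness of the optimal \emph{control} follows by inspecting the equality cases in Lemma~\ref{aux1} is optimistic --- the paper asserts only uniqueness of the pair $(b^\ast,\ell^\ast)$, and uniqueness among all admissible $Z\in\Lambda$ is a separate (and typically more delicate) question.
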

\begin{proof}
It is clear that the proposed harvesting strategy $Z^{b^\ast}$ is admissible. Our objective is now to show that this policy attains the maximal expected average cumulative yield $\ell^\ast$ and is, therefore, optimal. To show that this is indeed the case, we first notice that the harvesting policy $Z^{b^\ast}$ is continuous on $t>0$, increases only when $X_t^{Z^{b^\ast}}=b^\ast$, and maintains the process $X_t^{Z^{b^\ast}}$ in $(0,b^\ast]$ for all $t>0$ (\cite{Harrison85,Kar83,ShLeGa84}).  In this case \eqref{e:harvested} can be re-expressed as
\begin{equation*}
Z_T^{b^\ast} = x - X_T^{Z^{b^\ast}} + \int_0^T \mu\left(X_t^{Z^{b^\ast}}\right)X_t^{Z^{b^\ast}}\,dt+ \int_0^T \sigma \left(X_t^{Z^{b^\ast}}\right)\,dB_{t}.
\end{equation*}
The continuity of the diffusion coefficient $\sigma(x)$ now guarantees that $\sigma(X_t^{Z^{b^\ast}})$ is bounded for all $t>0$ and, therefore, that
\begin{align*}
\frac{\mathbb{E}_x\left[Z_T^{b^\ast}\right]}{T} = \frac{x - \mathbb{E}_x\left[X_T^{Z^{b^\ast}}\right]}{T} + \frac{1}{T}\mathbb{E}_x\int_0^T \mu\left(X_t^{Z^{b^\ast}}\right)X_t^{Z^{b^\ast}}\,dt.
\end{align*}
Consequently,
\begin{align*}
\lim_{T\rightarrow\infty}\frac{\mathbb{E}_x\left[Z_T^{b^\ast}\right]}{T} = \lim_{T\rightarrow\infty} \frac{1}{T}\mathbb{E}_x\int_0^T \mu\left(X_t^{Z^{b^\ast}}\right)X_t^{Z^{b^\ast}}\,dt.
\end{align*}
Since $m((0,b^\ast))<\infty$ we notice that the process is ergodic and has an invariant probability measure $\pi(\cdot)=\frac{m(\cdot)}{m((0,b^\ast))}$ (cf. \cite{BS15}, pp. 37-38). Hence,
\begin{align*}
\lim_{T\rightarrow\infty}\frac{1}{T}\mathbb{E}_x\int_0^T \mu\left(X_t^{Z^{b^\ast}}\right)X_t^{Z^{b^\ast}}\,dt = \int_0^{b^\ast} \mu(x)x \frac{m'(x)}{m((0,b^\ast))}\,dx = \mu(b^\ast)b^\ast =  \ell^\ast,
\end{align*}
demonstrating the optimality of the proposed policy.
\end{proof}
\begin{rmk}
It is worth noticing that since (cf. pp. 36--38 in \cite{BS15})
$$
\lim_{t\rightarrow \infty}\frac{\int_0^t\mu(X_{s})X_{s}\mathbbm{1}_{(0,b]}(X_{s})ds}{\int_0^t\mathbbm{1}_{(0,b]}(X_{s})ds}=\frac{1}{S'(b)m((0,b))}=\mu(b)b
$$
our findings are in line with observations based on renewal theoretic approaches to ergodic control (cf. Chapter 5 in \cite{Harrison85}). On the other hand
we also observe that
$$
b^\ast = \argmax_{b\in \mathbb{R}_+}\left\{\E\left[\mu(X^{Z^{b}}_\infty)X^{Z^{b}}_\infty\right]\right\}
$$
where $X_t$ denotes the population density in the absence of harvesting and $X^{Z^{b}}_t\rightarrow X^{Z^{b}}_\infty\sim m'(x)\mathbbm{1}_{(0,b]}(x)/m((0,b))$ as $t\uparrow\infty$. Consequently, the same conclusion could be obtained by focusing on the ergodic limit of the process controlled by $Z_t^{b}$.
\end{rmk}
Theorem \ref{t:main} demonstrates that the optimal harvesting policy is of the standard local time push type in the ergodic control setting as well. Consequently, under the optimal harvesting policy the population is maintained below an optimal threshold by harvesting (in an infinitely intense fashion) only at instants when the population hits the optimal boundary. Below the critical threshold the population is naturally left unharvested.

One may wonder wether the findings of Theorem \ref{t:main} could be extended further to a setting focusing on the almost sure maximization problem
\begin{equation}\label{e:almostsure}
\sup_{Z\in\Lambda}\liminf_{T\rightarrow\infty} \frac{1}{T}\int_0^T\,dZ_t=\sup_{Z\in\Lambda}\liminf_{T\rightarrow\infty} \frac{ Z_T}{T}.
\end{equation}
This is an almost sure statement, compared to the maximization from \eqref{e:max_control} which deals with expected values. In order to delineate general circumstances under which the almost sure maximization problem admits a local time push type solution, we initially analyze the problem by focusing solely on this type of harvesting policies. Our main findings on that class is established in the next proposition.
\begin{prop}\label{Prob1Limit}
Let $Z^b\in \Lambda$ be an arbitrary local time push type harvesting policy maintaining the population density on $(0,b)$ for all $t>0$. Then, for any $X_0^Z=x\in (0,b)$
\begin{equation}\label{AlmCertlim1}
\PP_x\left\{\lim_{T\rightarrow\infty}\frac{Z_T^{b}}{T}=\lim_{T\rightarrow\infty}\frac{1}{T}\int_0^T \mu\left(X_t^{Z^{b}}\right)X_t^{Z^{b}}\,dt = \frac{1}{S'(b)m((0,b))}\right\}=1.
\end{equation}
Consequently,
\begin{equation}\label{OptLim}
\PP_x\left\{\lim_{T\rightarrow\infty}\frac{Z_T^{b}}{T} \leq \lim_{T\rightarrow\infty}\frac{Z_T^{b^\ast}}{T}=\sup_{b>0}\left\{\frac{1}{S'(b)m((0,b))}\right\} = \mu(b^\ast)b^\ast\right\}=1.
\end{equation}
\end{prop}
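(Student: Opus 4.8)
The strategy is to repeat the argument behind Theorem \ref{t:main}, replacing convergence in expectation by almost sure convergence throughout. Since $Z^b$ is a pure local time push policy it is continuous, increases only on $\{t>0:X^{Z^b}_t=b\}$, and keeps $X^{Z^b}$ in $(0,b]$, so integrating \eqref{e:harvested} gives, $\PP_x$-a.s.,
$$
Z_T^b = x - X_T^{Z^b} + \int_0^T \mu\big(X_t^{Z^b}\big)X_t^{Z^b}\,dt + M_T, \qquad M_T:=\int_0^T \sigma\big(X_t^{Z^b}\big)\,dB_t .
$$
After dividing by $T$ there are three terms to control: the boundary term $(x-X_T^{Z^b})/T$, the occupation--time term $T^{-1}\int_0^T \mu(X^{Z^b}_t)X^{Z^b}_t\,dt$, and the martingale term $M_T/T$.

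The boundary term is harmless: $0<X_T^{Z^b}\le b$ forces $|x-X_T^{Z^b}|/T\le (x+b)/T\to 0$. For the occupation--time term I would use that $X^{Z^b}$ is a positive recurrent diffusion on $(0,b]$: under Assumptions \ref{A:ES}--\ref{A:suff} the endpoint $0$ is unattainable with $\lim_{y\downarrow 0}1/S'(y)=0$ and $\lim_{y\downarrow0}S((y,b))=+\infty$ (see the Remark after Assumption \ref{A:suff}), while $m((0,b))<\infty$ by (A3); hence $X^{Z^b}$ admits the invariant probability measure $\pi(dx)=m'(x)\1_{(0,b]}(x)\,dx/m((0,b))$ and the ergodic theorem for one--dimensional diffusions (\cite{BS15}, pp.~36--38) gives
$$
\lim_{T\to\infty}\frac1T\int_0^T g\big(X_t^{Z^b}\big)\,dt=\int_0^b g\,d\pi\qquad \PP_x\text{-a.s.}
$$
for every $g\in L^1(\pi)$. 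Taking $g(x)=\mu(x)x$, which lies in $L^1(\pi)$ because $\int_0^b\mu(y)y\,m'(y)\,dy=1/S'(b)<\infty$ (the identity established in the proof of Lemma \ref{aux2}) and $\mu(x)x$ is bounded near $b$, the limit equals $\frac{1}{S'(b)m((0,b))}$, again by that identity.

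The only genuinely new point, and the one I expect to be the crux, is showing $M_T/T\to0$ $\PP_x$-a.s.; unlike in Theorem \ref{t:main}, where this term simply has zero mean, the almost sure statement requires the strong law for local martingales together with a.s. linear growth of the quadratic variation. Applying the ergodic theorem with $g(x)=\sigma^2(x)$ gives
$$
\frac{\langle M\rangle_T}{T}=\frac1T\int_0^T\sigma^2\big(X_t^{Z^b}\big)\,dt\;\longrightarrow\;\frac{2}{m((0,b))}\int_0^b\frac{dx}{S'(x)}\in(0,\infty)\qquad \PP_x\text{-a.s.},
$$
the integral being finite since $1/S'$ is continuous on $(0,b]$ and extends continuously to $[0,b]$ with value $0$ at the origin. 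In particular $\langle M\rangle_\infty=+\infty$ a.s., so a standard strong law of large numbers for continuous local martingales yields $M_T/\langle M\rangle_T\to0$ a.s.; multiplying by $\langle M\rangle_T/T$ gives $M_T/T\to0$ a.s. Collecting the three limits proves \eqref{AlmCertlim1}. Finally, \eqref{OptLim} follows by taking the intersection of the probability--one event in \eqref{AlmCertlim1} for the given $b$ with the corresponding event for $b=b^\ast$: on that event
$$
\lim_{T\to\infty}\frac{Z_T^b}{T}=\frac{1}{S'(b)m((0,b))}\le\sup_{b>0}\frac{1}{S'(b)m((0,b))}=\frac{1}{S'(b^\ast)m((0,b^\ast))}=\mu(b^\ast)b^\ast=\lim_{T\to\infty}\frac{Z_T^{b^\ast}}{T},
$$
where the evaluation of the supremum and the last identity come from Lemma \ref{aux2} and the optimality conditions \eqref{opt1}.
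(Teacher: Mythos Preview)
Your argument is correct and follows essentially the same route as the paper's proof: the same decomposition of $Z_T^b/T$ coming from \eqref{e:harvested}, the trivial boundedness argument for $(x-X_T^{Z^b})/T$, the ergodic theorem on $(0,b]$ for the drift term, and the strong law of large numbers for local martingales (via linear growth of the quadratic variation obtained from the same ergodic theorem applied to $\sigma^2$) for $M_T/T$; the paper likewise deduces \eqref{OptLim} directly from Lemma~\ref{aux2}. The only differences are cosmetic: you supply a bit more detail on the $L^1(\pi)$-integrability of $x\mapsto\mu(x)x$ and on the finiteness of $\int_0^b dx/S'(x)$, and you phrase the martingale step as $M_T/\langle M\rangle_T\to 0$ followed by multiplication rather than citing Mao's Theorem~1.3.4 directly.
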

\begin{proof}
Let $b\in (0,\infty)$ be an arbitrary finite boundary and consider the policy $Z_t^b\in \Lambda$ maintaining the population density in $(0,b)$ for all $t>0$. As in the case of Theorem \ref{t:main}, the policy is continuous on $t>0$ and increases only when $X_t^{Z^{b}}=b$. Moreover,
\begin{equation}\label{e:Z}
\frac{Z_T^{b}}{T} = \frac{x}{T}+ \frac{1}{T}\int_0^T \mu\left(X_t^{Z^{b}}\right)X_t^{Z^{b}}\,dt+ \frac{1}{T}\int_0^T \sigma \left(X_t^{Z^{b}}\right)\,dB_{t} -\frac{ X_T^{Z^{b}}}{T}.
\end{equation}
Since $|X_t^{Z^{b}}|\leq b$ for all $t>0$ one trivially has
\begin{equation}\label{e:bounded}
\lim_{T\rightarrow\infty}\frac{X_T^{Z^{b}}}{T}=0
\end{equation}
with probability $1$. Since $m((0,b))<\infty$ the controlled process is ergodic on $(0,b)$ and has an invariant probability measure $\pi(\cdot)=\frac{m(\cdot)}{m((0,b))}$. Invoking the ergodic results from \cite{BS15} (pp. 37-38) shows that almost surely
\begin{equation}\label{e:ergodic}
\lim_{T\rightarrow\infty}\frac{1}{T}\int_0^T \mu\left(X_t^{Z^{b}}\right)X_t^{Z^{b}}\,dt = \int_0^{b} \mu(x)x \frac{m'(x)}{m((0,b))}\,dx = \frac{1}{S'(b)m((0,b))}.
\end{equation}
Let $L_{T}=\int_0^T \sigma \left(X_t^{Z^{b}}\right)\,dB_{t}$. Then $(L_{T})_{T\geq 0}$ is a local martingale with quadratic variation $Q_{T}=\int_0^T \sigma^2 \left(X_t^{Z^{b}}\right)\,dt$. By the ergodic results from \cite{BS15}, one has that almost surely
$$
\limsup_{T\rightarrow\infty}\frac{Q_{t}}{T}= \int_0^{b} \sigma^2(x)\frac{m'(x)}{m(0,b)}\,dx<\infty.
$$
This combined with the Law of Large Numbers for local martingales (see Theorem 1.3.4 from \cite{MAO}) yields that almost surely
\begin{equation}\label{e:LLN}
\lim_{T\rightarrow\infty}\frac{1}{T}\int_0^T \sigma \left(X_t^{Z^{b}}\right)\,dB_{t}=0.
\end{equation}
Using \eqref{e:bounded}, \eqref{e:ergodic} and \eqref{e:LLN} in \eqref{e:Z} we get that \eqref{AlmCertlim1} holds almost surely.
\eqref{OptLim} then follows from Lemma \ref{aux2}.
\end{proof}
Proposition \ref{Prob1Limit} shows that the local time push controls affect the dynamics of the controlled population density in a way
where the almost sure asymptotic average cumulative harvest can be computed explicitly in terms of the exogenous harvesting boundary. Since this
representation is valid for all local time push controls, we find that choosing the threshold according to the rule maximizing the long run expected
average cumulative harvest results in a maximal representation in this setting as well. Given the generality of admissible harvesting strategies,
it is a challenging task to prove a general verification
lemma analogous to Lemma \ref{aux1}. Fortunately, there is a relatively large class of processes for which the desired result is  valid.
To see that this is indeed the case, we first establish the following auxiliary result.
\begin{lm}\label{l:comparison}
Assume \eqref{e:1d} has pathwise unique solutions and there exists an increasing function $\rho:\R_+\rightarrow\R$ such that $|\sigma(x)-\sigma(y)|\leq \rho(|x-y|)$ for all $x,y\in (0,\infty)$ and $\int_{0+}\rho^{-2}(z)\,dz=+\infty$. Suppose $X$ is the solution to \eqref{e:1d} and $X^Z$ is the solution to \eqref{e:harvested} for a fixed $Z\in\Lambda$. If $X_0^Z\leq X_0$ then almost surely
$$
\PP\{X_s^Z\leq X_s,~s\geq 0\}=1.
$$
\end{lm}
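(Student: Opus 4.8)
The plan is to reduce the claim to the classical one-dimensional comparison estimate of Yamada and Watanabe, the preliminary move being to pass to the \emph{natural scale} of $X$, where the drift disappears and the only surviving first-order contribution is the singular term $-dZ$, which always pushes the inequality in the favorable direction.

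So first I would introduce the scale function $S$ of \eqref{e:scale}; since $\mu,\sigma$ are continuous and $\sigma>0$ we have $S\in C^2((0,\infty))$ with $S'>0$. Because $\mathcal A S\equiv 0$, It\^o's formula shows that $S(X_t)$ is the local martingale $\int_0^t g(X_s)\,dB_s$ with $g:=S'\sigma$, while, writing $Z=Z^c+Z^d$ with $Z^c$ continuous nondecreasing and $Z^d$ purely atomic nondecreasing, $S(X^Z_t)$ equals that same type of local martingale (integrand $g(X^Z_\cdot)$, driver $B$) minus the continuous nondecreasing process $K_t:=\int_0^t S'(X^Z_s)\,dZ^c_s$ minus a nonnegative pure-jump process. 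Hence $\Xi_t:=S(X^Z_t)-S(X_t)$ has $\Xi_0\le 0$ (since $X^Z_0\le X_0$ and $S$ is increasing) and
\[
\Xi_t=\Xi_0+\int_0^t\big(g(X^Z_s)-g(X_s)\big)\,dB_s-K_t+\sum_{0<s\le t}\big(S(X^Z_s)-S(X^Z_{s-})\big),
\]
a \emph{driftless} semimartingale whose continuous finite-variation part $-K$ is non-increasing and whose jumps $S(X^Z_s)-S(X^Z_{s-})\le 0$ are downward. Moreover, on every compact $I=[1/k,k]\subset(0,\infty)$ there is an increasing $\rho_I$ with $\int_{0+}\rho_I(z)^{-2}\,dz=+\infty$ such that $|g(x)-g(y)|\le\rho_I\big(|S(x)-S(y)|\big)$ for $x,y\in I$: indeed $S'$ is positive and locally Lipschitz, so $S$ is bi-Lipschitz on $I$, and $g=S'\sigma$ satisfies $|g(x)-g(y)|\le C_I\big(\rho(|x-y|)+|x-y|\big)$ there, with $\int_{0+}\big(\rho(z)+z\big)^{-2}\,dz=+\infty$ (split $(0,\delta)$ into $\{\rho(z)\le z\}$ and $\{\rho(z)>z\}$).

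Then I would run the Yamada--Watanabe estimate. Fix $k$, let $\tau_k$ be the first time either $X$ or $X^Z$ leaves $I=[1/k,k]$, and let $\phi_n\in C^2(\R)$ be the standard Yamada functions attached to $\rho_I$: $\phi_n\equiv 0$ on $(-\infty,0]$, $\phi_n\uparrow(\cdot)^+$, $0\le\phi_n'\le 1$, and $0\le\phi_n''(x)\le\tfrac{2}{n}\rho_I(x)^{-2}$ with support shrinking to $0^+$. Applying the generalized It\^o formula to $\phi_n(\Xi_{t\wedge\tau_k})$, and using $\phi_n(\Xi_0)=0$, one writes $\phi_n(\Xi_{t\wedge\tau_k})$ as a sum of (i) a martingale on $[0,\tau_k]$ (zero mean), (ii) $-\int_0^{t\wedge\tau_k}\phi_n'(\Xi_s)\,dK_s\le 0$, (iii) $\sum_{s\le t\wedge\tau_k}\big(\phi_n(\Xi_s)-\phi_n(\Xi_{s-})\big)\le 0$ ($\phi_n$ nondecreasing, $\Xi$ jumps downward), and (iv) the second-order term $\tfrac{1}{2}\int_0^{t\wedge\tau_k}\phi_n''(\Xi_s)\big(g(X^Z_s)-g(X_s)\big)^2\,ds\le t/n$, because on the support of $\phi_n''$ one has $\Xi_s>0$ and, as $X^Z_s,X_s\in I$, $|g(X^Z_s)-g(X_s)|\le\rho_I(\Xi_s)$. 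Taking expectations and letting $n\to\infty$ (monotone convergence, $\Xi_{\cdot\wedge\tau_k}$ bounded on $[0,\tau_k]$) gives $\E\big[(\Xi_{t\wedge\tau_k})^+\big]=0$, hence $S(X^Z_{t\wedge\tau_k})\le S(X_{t\wedge\tau_k})$, i.e.\ $X^Z_{t\wedge\tau_k}\le X_{t\wedge\tau_k}$ a.s.\ for every $t$. Letting $k\to\infty$: since the boundaries are unattainable for $X$ and $X^Z$ cannot explode while it stays below $X$, $\tau_k\uparrow\infty$ unless $X^Z$ reaches $0$, and from the moment $X^Z$ reaches $0$ the inequality $X^Z\le X$ is trivial; this yields $\PP\{X^Z_s\le X_s,\ s\ge 0\}=1$. (The assumed pathwise uniqueness of \eqref{e:1d} is what legitimizes the argument in the strong sense, ensuring $X$ and $X^Z$ live on one filtered space driven by the same $B$.)

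The main obstacle is the conceptual step of seeing that one should change to natural scale first: working directly with $X^Z-X$, the drift difference $X^Z_t\mu(X^Z_t)-X_t\mu(X_t)$ on $\{X^Z_t>X_t\}$ cannot be absorbed into a Gronwall inequality, since $x\mapsto x\mu(x)$ is assumed only continuous, not locally Lipschitz. After the drift is removed the argument is the textbook Yamada--Watanabe computation; the remaining points — that the Yamada modulus survives the change of scale, that the localizing times reach infinity, and that the generalized It\^o formula applies to the jump semimartingale $\Xi$ — are routine.
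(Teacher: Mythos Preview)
Your proof is essentially correct and takes a genuinely different route from the paper. The paper's argument is short and modular: it introduces the $\varepsilon$-perturbed diffusion $dX^\varepsilon_t = X^\varepsilon_t(\mu(X^\varepsilon_t)+\varepsilon)\,dt + \sigma(X^\varepsilon_t)\,dB_t$, invokes the Ikeda--Watanabe comparison theorem to get $X_s \le X^\varepsilon_s$, then applies Yan's semimartingale comparison theorem (which tolerates the singular $-dZ$ term and does not require a Lipschitz drift) to obtain $X^Z_s \le X^\varepsilon_s$, and finally sends $\varepsilon \downarrow 0$ using pathwise uniqueness. The $\varepsilon$-gap in the drift is exactly what lets Yan's theorem fire despite $x\mapsto x\mu(x)$ being merely continuous---the same obstacle you identify. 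Your approach instead eliminates the drift altogether by passing to natural scale and then runs the Yamada--Watanabe smoothing estimate directly on $\Xi = S(X^Z)-S(X)$, which is an elegant alternative and makes the argument self-contained (no black-box citation to Yan). The price is that you must check the Yamada modulus survives the change of scale; your claim that $\int_{0+}(\rho(z)+z)^{-2}\,dz=+\infty$ is true, but the parenthetical hint to ``split $(0,\delta)$ into $\{\rho\le z\}$ and $\{\rho>z\}$'' is not by itself a proof---one must additionally use the monotonicity of $\rho$ to rule out the scenario where $\int_{\{\rho\le z\}}z^{-2}<\infty$ and $\int_{\{\rho>z\}}\rho^{-2}<\infty$ simultaneously (this scenario \emph{is} excluded, but it takes a short extra argument). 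The paper's approach buys brevity and avoids this bookkeeping; yours buys transparency about \emph{why} the comparison holds---once the drift is gone, the harvesting term only pushes $\Xi$ downward.
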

\begin{proof}
This is a modification of the arguments from the seminal papers \cite{Ya73} and \cite{IW77} for the comparison of one-dimensional diffusions and the paper \cite{Y86} for the comparison of semimartingales. For small $\eps>0$ define the process $X^\eps$ via
$$
d X^{\eps}_{t} =  X^{\eps}_{t}(\mu(X^{\eps}_{t})+\eps)\,dt + \sigma  (X^{\eps}_{t})\,dB_{t}.
$$
Assume that $X_0=X_0^\eps$. By \cite{IW77} we see that almost surely
$$
X_s\leq X^\eps_s,~s\geq 0.
$$
By the pathwise uniqueness of solutions of \eqref{e:1d}, combined with the continuity of $\mu$ we have almost surely that
$$
X_s= \lim_{\eps\downarrow 0}X^\eps_s,~s\geq 0.
$$
We note that the semimartingales $X^Z$ and $X^\eps$ satisfy the assumptions of Theorem 1 from \cite{Y86}. Therefore, if $X_0^Z\leq X^\eps_0$, we have that almost surely
$$
X_s^Z\leq X^\eps_s, ~s\geq 0.
$$
Taking the limit as $\eps\downarrow 0$ we get
$$
X_s^Z\leq X_s, ~s\geq 0
$$
which finishes the proof.
\end{proof}
\begin{rmk}
We make two remarks on the assumptions needed in Lemma \ref{l:comparison}. First of all, sufficient conditions for pathwise uniqueness of solutions can be found, for example, in \cite{IW,K02,MAO}. Second, for most models of natural resources $\sigma(x)=\sigma x$ for some $\sigma>0$. In those cases the required growth condition is satisfied by simply taking $\rho(x)=\sigma(x)=\sigma x$.
\end{rmk}
Lemma \ref{l:comparison} states a set of conditions under which the solution of the uncontrolled dynamics \eqref{e:1d} dominates the solution of the dynamics subject to harvesting \eqref{e:harvested}.
It is worth pointing out that similar comparison results have been previously established for Lipschitz-continuous coefficients (see Theorem 54 in \cite{Pr05}). However, that result does not directly apply to our setting, since most applied population models have only locally Lipschitz-continuous coefficients. Given our findings in Lemma \ref{l:comparison} we can now establish the following Theorem which extends our results on the expected average cumulative yield to the almost sure setting.
\begin{restatable}{thm}{maina}\label{t:main_as}
Assume that Assumptions \ref{A:ES} and \ref{A:suff} hold, that \eqref{e:1d} has pathwise unique solutions, that there exists an increasing function $\rho:\R_+\rightarrow\R$ such that $|\sigma(x)-\sigma(y)|\leq \rho(|x-y|)$ and $\int_{0+}\rho^{-2}(z)\,dz=+\infty$, and that
\begin{enumerate}
  \item The process $X$ from \eqref{e:1d} has a unique invariant probability measure on $(0,\infty)$.
  \item One can find a twice continuously differentiable function $v:\mathbb{R}_+\mapsto \mathbb{R}_+$ satisfying the variational inequalities $v'(x)\geq 1$ and $(\mathcal{A}v)(x)\leq \ell$ for all $x\in \mathbb{R}_+$.
  \item The function $g(x):=\sigma(x) v'(x)$ is non-decreasing and square-integrable with respect to the speed measure of $X$.
\end{enumerate}
Then for any admissible strategy $Z\in\Lambda$ and any $X^Z_0=x\in(0,\infty)$
\begin{align}\label{dom}
\PP_x\left\{\liminf_{T\rightarrow\infty}\frac{Z_T}{T}\leq \ell\right\}=1.
\end{align}
Moreover,
\begin{align}\label{domopt}
\PP_x\left\{\liminf_{T\rightarrow\infty}\frac{Z_T}{T}\leq \liminf_{T\rightarrow\infty}\frac{Z_T^{b^\ast}}{T} = \ell^\ast=\mu(b^\ast)b^\ast\right\}=1
\end{align}
for all $Z\in \Lambda$ and all $X^Z_0=x\in(0,\infty)$.
\end{restatable}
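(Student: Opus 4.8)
The plan is to run the argument behind Lemma~\ref{aux1} pathwise rather than in expectation, keeping the stochastic integral visible, and then to dispose of that integral using the comparison inequality of Lemma~\ref{l:comparison} together with the ergodicity of the uncontrolled diffusion. Fix $Z\in\Lambda$ and let $v$ be the function supplied by hypothesis~(2). Realize $X$ and $X^Z$ on the same probability space, driven by the same Brownian motion, with $X_0=X_0^Z=x$; the modulus-of-continuity condition on $\sigma$ and pathwise uniqueness (both in the hypotheses) let us invoke Lemma~\ref{l:comparison}, so that $X_s^Z\le X_s$ for all $s\ge0$ almost surely. Since the boundaries of $(0,\infty)$ are unattainable for $X$, the process $X^Z$ does not explode either, hence the localizing times in the It\^o--D\"oblin expansion eventually equal $T$ and no Fatou step is needed. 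Applying the generalized It\^o--D\"oblin formula to the nonnegative function $v(X^Z)$, using $(\mathcal{A}v)\le\ell$, and bounding the $dZ$-contributions from below by $Z_T$ via $v'\ge1$ (exactly as in the proof of Lemma~\ref{aux1}, but now retaining the martingale term), yields the pathwise estimate
\[
Z_T\le v(x)+\ell T+M_T,\qquad M_T:=\int_0^T\sigma(X_s^Z)v'(X_s^Z)\,dB_s,
\]
valid for all $T\ge0$ almost surely. Dividing by $T$ and letting $T\to\infty$, the whole statement reduces to showing $M_T/T\to0$ a.s.

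The process $M$ is a continuous local martingale with $\langle M\rangle_T=\int_0^T g^2(X_s^Z)\,ds$, where $g=\sigma v'$. By hypothesis~(3) the function $g$ is non-decreasing, so $X_s^Z\le X_s$ forces $g^2(X_s^Z)\le g^2(X_s)$ and hence $\langle M\rangle_T\le\int_0^T g^2(X_s)\,ds$. By hypothesis~(1) the uncontrolled diffusion is positive recurrent on $(0,\infty)$ with unique invariant probability measure $\pi$ proportional to the speed measure $m$, and hypothesis~(3) states precisely that $\int g^2\,d\pi<\infty$; the ergodic theorem for one-dimensional diffusions (\cite{BS15}, pp.~37--38) then gives $\frac1T\int_0^T g^2(X_s)\,ds\to\int g^2\,d\pi<\infty$ almost surely, so $\limsup_{T\to\infty}\langle M\rangle_T/T<\infty$ a.s. The strong law of large numbers for continuous local martingales (Theorem~1.3.4 in \cite{MAO}) now forces $M_T/T\to0$: on $\{\langle M\rangle_\infty<\infty\}$ the martingale $M_T$ has a finite limit, while on $\{\langle M\rangle_\infty=\infty\}$ one has $M_T/\langle M\rangle_T\to0$ and $\langle M\rangle_T/T$ stays bounded, so $M_T/T=(M_T/\langle M\rangle_T)(\langle M\rangle_T/T)\to0$. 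This is the same device already used in the proof of Proposition~\ref{Prob1Limit}.

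Combining the two steps, $\liminf_{T\to\infty}Z_T/T\le\ell$ almost surely for every $Z\in\Lambda$ and every $x\in(0,\infty)$, which is \eqref{dom}. For \eqref{domopt} I would apply \eqref{dom} to the canonical choice $v=u$, $\ell=\ell^\ast$ coming from the free-boundary problem \eqref{freebdry}: Lemma~\ref{aux2} already establishes $u'\ge1$ and $(\mathcal{A}u)\le\ell^\ast$, and hypothesis~(3), imposed for exactly this $v$, supplies the remaining requirement, so $\liminf_{T\to\infty}Z_T/T\le\ell^\ast$ a.s.\ for all admissible $Z$. The matching identity is Proposition~\ref{Prob1Limit} with $b=b^\ast$, which gives $\lim_{T\to\infty}Z_T^{b^\ast}/T=\frac{1}{S'(b^\ast)m((0,b^\ast))}=\mu(b^\ast)b^\ast=\ell^\ast$ almost surely; together these yield \eqref{domopt}.

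The main obstacle, and the sole reason the extra hypotheses (1)--(3) and Lemma~\ref{l:comparison} are needed here but not in Theorem~\ref{t:main}, is the pathwise sublinearity of $M_T$: in the expected-value problem the stochastic integral simply vanishes under $\E_x$, whereas here it must be shown to be $o(T)$ along every admissible trajectory. The key mechanism is that monotonicity of $g=\sigma v'$ lets the comparison bound $X^Z\le X$ transfer the problem from the intractable controlled occupation measure to the occupation measure of the uncontrolled, ergodic diffusion, for which $\frac1T\int_0^T g^2(X_s)\,ds$ converges; the local-martingale law of large numbers then closes the argument. Two routine points still need care: justifying that the It\^o localization reaches $T$ (handled by non-explosion of $X$ and $X^Z\le X$), and checking that the stated pathwise-uniqueness and modulus-of-continuity hypotheses are indeed those required by Lemma~\ref{l:comparison}, which they are verbatim.
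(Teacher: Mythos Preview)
Your proposal is correct and follows essentially the same route as the paper: a pathwise version of the Lemma~\ref{aux1} estimate retaining the stochastic integral, the comparison $X^Z\le X$ from Lemma~\ref{l:comparison} together with the monotonicity of $g=\sigma v'$ to dominate the quadratic variation by that of the uncontrolled ergodic diffusion, and then the law of large numbers for local martingales to kill $M_T/T$. The paper's derivation of \eqref{domopt} simply cites \eqref{dom} and \eqref{OptLim}; your explicit specialization to $v=u$, $\ell=\ell^\ast$ (justified by Lemma~\ref{aux2} and hypothesis~(3)) is exactly the intended reading, only spelled out more carefully.
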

\begin{proof}
It is clear that for any admissible policy $Z\in\Lambda$ we have
\begin{equation}\label{e:Z_v}
\frac{Z_T}{T} \leq \frac{v(x)}{T} +\ell +\frac{1}{T}  \int_0^T \sigma(X_s^Z)v’(X_s^Z)dB_s.
\end{equation}
The local martingale
$$
Q_T=\int_0^T \sigma(X_s^Z)v’(X_s^Z)dB_s
$$
has quadratic variation
$$
\frac{1}{T} [Q,Q]_T=\frac{1}{T} \int_0^T (\sigma(X_s^Z))^2 (v’(X_s^Z))^2 ds.
$$
By our assumptions and Lemma \ref{l:comparison} we have almost surely that
$$
X_s^Z\leq X_s,~s\geq 0.
$$
Then, almost surely
$$
\frac{1}{T} [Q,Q]_T=\frac{1}{T} \int_0^T (\sigma(X_s^Z))^2 (v’(X_s^Z))^2 ds \leq \frac{1}{T} \int_0^T (\sigma(X_s))^2 (v’(X_s))^2 ds.
$$
By the ergodic results from \cite{BS15} and the assumptions of the proposition, one has that almost surely
$$
\limsup_{T\rightarrow\infty} \frac{1}{T} [Q,Q]_T \leq \lim_{T\rightarrow\infty} \frac{1}{T} \int_0^T (\sigma(X_s))^2 (v’(X_s))^2 ds= \int g(x) m'(x)/m((0,\infty)) dx < \infty.
$$
The Law of Large Numbers for local martingales (see Theorem 1.3.4 from \cite{MAO}) yields that almost surely
$$
\lim_{T\rightarrow\infty }\frac{Q_T}{T}= 0.
$$
If we combine this with \eqref{e:Z_v} we get
$$
\liminf_{T\rightarrow\infty}\frac{Z_T}{T}\leq \ell.
$$
Finally, inequality \eqref{domopt} follows from \eqref{dom} and \eqref{OptLim}.
\end{proof}
\begin{rmk}
We make the following three remarks on the assumptions (1)-(3) needed in Theorem \ref{t:main_as}.
\begin{itemize}
  \item[(a)] If $0,\infty$ are unattainable and not attracting, i.e. for any $x\in(0,\infty)$ we have $\PP_x\left\{X_t\rightarrow 0\right\}=\PP_x\left\{X_t\rightarrow \infty\right\}=0$, and $m((0,\infty))<\infty$ then $X$ has a unique invariant probability measure with density $\frac{m'(\cdot)}{m((0,\infty))}$ on $(0,\infty)$. In terms of boundary behavior the points $0, \infty$ can be entrance or natural, and the natural boundaries have to be non-attracting.
  \item[(b)]  We note that the function $u$ defined in \eqref{e:u} satisfies $(\mathcal{A}u)(x) \leq \ell^\ast = \mu(b^\ast)b^\ast$ and $u'(x)\geq 1, x\in \R_+$.
  \item[(c)] Checking condition (6) reduces to looking at the function
  \begin{equation}\label{e:g}
\begin{aligned}
g(x)=\sigma(x) u'(x)
&= \begin{cases}
\sigma(x)\ell^\ast S'(x)m((0,x)), & \mbox{if $x\in(0,b^*)$}\\
\sigma(x) & \mbox{if $x\geq b^*$},
\end{cases}
\end{aligned}
\end{equation}
verifying that it is non-decreasing, and then checking whether $\int_0^\infty g^2(x) m'(x)\,dx<\infty$.
\end{itemize}

\end{rmk}
\begin{rmk}
Our work is related to \cite{JaZe06} where the authors consider the more general case where there are two controls. Consider the controlled diffusion
$$
d X_{t} =  \mu(X_{t})\,dt + \sigma(X_{t})\,dB_{t} +d\xi^+_t-d\xi^-_t
$$
where $\xi$ is a right-continuous process with left limits that has finite variation and is adapted. Fix a starting point $X(0)=x\in\R$. The paper \cite{JaZe06} is concerned with the the minimization of
$$
\limsup_{T\rightarrow\infty} \frac{1}{T}\E_x\left[\int_0^Th(X_s)\,ds+\int_{[0,T]}k_+(X_s)d\xi_s^+ +\int_{[0,T]}k_-(X_s)d\xi_s^-\right]
$$
and the almost sure minimization of
$$
\limsup_{T\rightarrow\infty} \frac{1}{T}\left[\int_0^Th(X_s)\,ds+\int_{[0,T]}k_+(X_s)d\xi_s^+ +\int_{[0,T]}k_-(X_s)d\xi_s^-\right].
$$
Here $h:\R\rightarrow\R$ is a given function that models the running cost resulting from the system's operation, while $k_+, k_-$ are given functions penalizing the expenditure of control effort.
We note that one of their assumption is that $0<\sigma^2(x)\leq C(1+|x|)$, which is more restrictive than what we have since in most population models $\sigma(x)= \sigma x$.
\end{rmk}

Our main result on the sign of the relationship between volatility and the optimal harvesting strategy is summarized in the following.
\begin{restatable}{thm}{mainv}\label{t:vol}
Increased volatility increases the optimal harvesting threshold $b^\ast$ and
decreases the long run average cumulative yield $\ell^\ast=\mu(b^\ast)b^\ast$.
\end{restatable}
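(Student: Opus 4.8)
The plan is to set up a comparison between two diffusions sharing the per-capita growth rate $\mu$ but with volatility coefficients $\sigma$ and $\widetilde\sigma$ satisfying $\sigma^2(x)\le\widetilde\sigma^2(x)$ for all $x>0$, both obeying Assumptions \ref{A:ES} and \ref{A:suff}. I decorate every object attached to $\widetilde\sigma$ with a tilde ($\widetilde S'$, $\widetilde m$, $\widetilde{\mathcal{A}}$, $\widetilde b^\ast$, $\widetilde\ell^\ast$, $\widetilde\pi$). The first observation is that $\hat x=\argmax\{\mu(x)x\}$ and $x_0=\mu^{-1}(0)$ depend only on $\mu$, so Lemma \ref{aux2} places both $b^\ast$ and $\widetilde b^\ast$ in $(\hat x,x_0)$, the interval on which $x\mapsto\mu(x)x$ is strictly decreasing by (A2).

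The conceptual heart of the argument is to exploit the concavity of $u$ recorded in the Remark after Lemma \ref{aux2}: $u''\le 0$ on $\R_+$, with $u''(x)=\tfrac{2S'(x)\ell^\ast}{\sigma^2(x)}f(x)<0$ on $(0,b^\ast)$ and $u''\equiv 0$ on $[b^\ast,\infty)$. Writing $\widetilde{\mathcal{A}}u=\mathcal{A}u+\tfrac12(\widetilde\sigma^2-\sigma^2)u''$ and using $u''\le 0$, $\widetilde\sigma^2\ge\sigma^2$ together with the properties $\mathcal{A}u\le\ell^\ast$ and $u'\ge1$ from Lemma \ref{aux2}, the \emph{same} function $u$ satisfies the hypotheses of the verification Lemma \ref{aux1} applied to the $\widetilde\sigma$-dynamics, with the constant $\ell^\ast$. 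Hence $\liminf_{T}\tfrac1T\E_x\int_0^T dZ_s\le\ell^\ast$ for every $Z\in\Lambda$ under the perturbed dynamics; applying this to the perturbed optimal strategy $Z^{\widetilde b^\ast}$, which by the $\widetilde\sigma$-version of Lemma \ref{aux2} and Theorem \ref{t:main} attains $\widetilde\ell^\ast=\mu(\widetilde b^\ast)\widetilde b^\ast$, yields $\widetilde\ell^\ast\le\ell^\ast$. Combined with the strict monotonicity of $x\mapsto\mu(x)x$ on $(\hat x,x_0)$ this immediately gives $\widetilde b^\ast\ge b^\ast$, which already establishes the sign of both effects claimed in the theorem.

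To upgrade to strict inequalities when the volatility genuinely increases (say $\widetilde\sigma^2>\sigma^2$ on a subset of $(0,b^\ast)$ of positive Lebesgue measure, in particular whenever $\widetilde\sigma>\sigma$ throughout $(0,\infty)$), I would argue by contradiction. If $\widetilde b^\ast=b^\ast=:b$ then $\widetilde\ell^\ast=\mu(b)b=\ell^\ast$. I then rerun the It{\^o}--D{\"o}blin and ergodic computation of the proof of Theorem \ref{t:main} for the perturbed dynamics controlled by $Z^b$, but with the \emph{original} function $u$ as test function (admissible since $u\in C^2(\R_+)$ is bounded on $(0,b]$, and $Z^b$ is continuous for $t>0$, confines the process to $(0,b]$, and increases only where $u'=1$); the boundary and martingale terms vanish in the limit and
\[
\widetilde\ell^\ast=\lim_{T\to\infty}\frac{\E_x[Z^b_T]}{T}=\int_0^{b}(\widetilde{\mathcal{A}}u)(x)\,\widetilde\pi(dx),\qquad \widetilde\pi(dx)=\frac{\widetilde m'(x)}{\widetilde m((0,b))}\,dx .
\]
On $(0,b)$ one has $\mathcal{A}u\equiv\ell^\ast$, hence $\widetilde{\mathcal{A}}u=\ell^\ast+\tfrac12(\widetilde\sigma^2-\sigma^2)u''\le\ell^\ast$, with strict inequality on the set where $\widetilde\sigma^2>\sigma^2$ because $u''<0$ there; since $\widetilde\pi$ has a strictly positive density on $(0,b)$, the integral is $<\ell^\ast$, contradicting $\widetilde\ell^\ast=\ell^\ast$. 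Thus $\widetilde b^\ast>b^\ast$, and then $\widetilde\ell^\ast=\mu(\widetilde b^\ast)\widetilde b^\ast<\mu(b^\ast)b^\ast=\ell^\ast$ by strict monotonicity of $x\mapsto\mu(x)x$ on $(\hat x,x_0)$.

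I expect the obstacles to be administrative rather than deep. The weak monotonicity in the second paragraph is essentially a one-line consequence of concavity of $u$ plus Lemma \ref{aux1}. The two points needing care are: (i) verifying that Assumptions \ref{A:ES}--\ref{A:suff} — especially (A3) and the unattainability/non-attraction of the boundaries — transfer to $\widetilde\sigma$, which may have to be imposed as an explicit hypothesis on the comparison; and (ii) justifying, in the strictness step, that the ergodic manipulations of Theorem \ref{t:main} apply verbatim to the perturbed dynamics driven by the \emph{non}-optimal test function $u$, namely that $\widetilde\sigma(\widetilde X^{Z^b})u'(\widetilde X^{Z^b})$ generates a genuine (not merely local) martingale, that $\widetilde{\mathcal{A}}u\in L^1(\widetilde\pi)$, and that $\E_x[u(\widetilde X^{Z^b}_T)]/T\to0$. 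Each of these holds under the standing assumptions but should be checked rather than asserted.
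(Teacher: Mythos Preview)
Your core argument (second paragraph) is exactly the paper's proof: use the concavity $u''\le 0$ to obtain $(\widetilde{\mathcal{A}}u)(x)=(\mathcal{A}u)(x)+\tfrac12(\widetilde\sigma^2-\sigma^2)u''\le\ell^\ast$, feed $u$ into the verification Lemma \ref{aux1} for the $\widetilde\sigma$-dynamics to get $\widetilde\ell^\ast\le\ell^\ast$, and then read off $\widetilde b^\ast\ge b^\ast$ from the monotonicity of $x\mapsto\mu(x)x$ on $(\hat x,x_0)$. The paper stops at the weak inequalities and does not pursue your strictness upgrade in the third paragraph, so that part is an addition beyond what is claimed or proved there.
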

\begin{proof}
Denote by $\tilde{b}$ the optimal harvesting threshold and by $\tilde{\ell}$ the maximal expected average cumulative yield associated with the more volatile dynamics characterized by the diffusion coefficient $\tilde{\sigma}(x)\geq \sigma(x)$ for all $x\in \mathbb{R}_+$
and let
$$
\tilde{\mathcal{A}}=\frac{1}{2}\tilde{\sigma}^2(x)\frac{d^2}{dx^2}+\mu(x)x\frac{d}{dx}
$$
denote the differential operator associated with the more volatile process. Let $u(x)$ be the solution of the free boundary problem \eqref{freebdry}. Because $u''(x)\leq 0$ we get
$$
(\tilde{\mathcal{A}}u)(x) = \frac{1}{2}(\tilde{\sigma}^2(x)-\sigma^2(x))u''(x) +(\mathcal{A}u)(x)\leq \ell^\ast
$$
for all $x\in \mathbb{R}_+$. Since we also have $u'(x)\geq 1$ we notice by combining Theorem \ref{t:main} and Lemma \ref{aux2} that $\tilde{\ell} \leq \ell^\ast$. However, since $\tilde{\ell} = \mu(\tilde{b})\tilde{b}$, $\ell^\ast = \mu(b^\ast)b^\ast$, and the optimal harvesting threshold is on the set where the drift is decreasing, we find $\tilde{b}\geq b^\ast$ which completes the proof of our claim.
\end{proof}

\section{Discounting and Harvesting: Connecting the Harvesting Problems}\label{s:discount}

The previous section focused on the optimal ergodic harvesting policy maximizing the expected (or almost sure) long-run average cumulative yield. It is naturally of interest to analyze in which way the optimal policy differs form the optimal policies suggested by models maximizing the expected \textit{present value} of the cumulative yield. To this end, let (cf. \cite{Al01})
\begin{align}\label{disc}
V_r(x) = \sup_{Z\in \Lambda}\mathbb{E}_x\int_0^\infty e^{-rs}dZ_s
\end{align}
denote the value of the harvesting policy maximizing the expected present value of the cumulative yield. Our objective is to characterize how the different problems are connected by relying on
an Abelian limit result first developed within singular stochastic control in the seminal paper \cite{Kar83} (see also \cite{Wee07} for a generalization).

In order to present our main findings on the connection between the two different approaches we first have to make a set of assumptions guaranteeing that the harvesting policy maximizing the expected present value of the cumulative yield is nontrivial. Define the function $\theta_r:\mathbb{R}\mapsto \mathbb{R}$ by
$$
\theta_r(x)=(\mu(x)-r)x,
$$
where $r>0$ denotes the prevailing discount rate. In addition to our assumptions on the boundary behavior of the population dynamics stated in Section \ref{s:model} we now assume the following.
\begin{asp} The function $\theta_r(x)$ satisfies
\begin{itemize}
  \item[(B1)] $\lim_{x\downarrow 0}\theta_r(x)\geq 0$ and $\lim_{x\rightarrow \infty}\theta_r(x)< -\varepsilon$, where $\varepsilon>0.$
  \item[(B2)] the function $\theta_r(x)$ attains a unique maximum at $\hat{x}_r\in (0,x_0^r)$, where $x_0^r = \inf\{x>0:\theta_r(x)=0\}$.
\end{itemize}
\end{asp}
\begin{rmk}
Note that if $\mu$ is continuous on $[0,\infty)$ then Assumptions \ref{A:ES} and\ref{A:suff} imply (B1) above. 
\end{rmk}

As was established in \cite{Al01}, one gets
\begin{align}\label{discdval}
V_r(x) = \begin{cases}
x + \frac{1}{r}\theta_r(x_r^\ast), &x\geq x_r^\ast,\\
\frac{\psi_r(x)}{\psi_r'(x_r^\ast)}, &x<x_r^\ast.
\end{cases}
\end{align}
The quantity $\psi_r(x)$ denotes the increasing fundamental solution of the differential equation $(\mathcal{A} u)(x)=ru(x)$. The optimal harvesting boundary $x_r^\ast = \argmin\{\psi_r'(x)\}\in (\hat{x}_r, x_0)$ is the unique root of the ordinary first order optimality condition $\psi_r''(x_r^\ast)=0$ which can be re-expressed as
\begin{align}\label{discopt}
\int_0^{x_r^\ast}\psi_r(z)(\theta_r(t)-\theta_r(x_r^\ast))m'(z)dz=0.
\end{align}
The value of the optimal harvesting policy $V_r(x)$ is monotonically increasing, concave, and twice continuously differentiable.
Moreover, increased volatility decreases the value of the optimal policy and expands the continuation region where harvesting is suboptimal by
increasing the optimal harvesting boundary $x_r^\ast$.

Under the optimal harvesting policy $Z^\ast$ the population density converges in law to its unique stationary distribution. In other words, $X_t^{Z^\ast}\Rightarrow \bar{X}_r$ as $t\rightarrow\infty$. The random variable
$\bar{X}_r$ is distributed on $(0,x_r^\ast)$ according to the density
$$
\mathbb{P}\left [\bar{X}_r\in dy\right] = \frac{m'(y)dy}{m((0,x_r^\ast))}.
$$

We can now establish the following limiting result
\begin{lm}\label{comparative}
Under our assumptions, increased discounting decreases the maximal expected present value of the cumulative yield and accelerates harvesting by decreasing the optimal harvesting boundary. Moreover, $\lim_{r\rightarrow 0+}x_r^\ast=b^\ast$ where $b^\ast$ is the optimal harvesting boundary from Theorems \ref{t:main} and \ref{t:main_as}.
\end{lm}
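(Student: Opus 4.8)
The plan is to prove the three assertions separately. The monotonicity of $V_r$ in $r$ is immediate: for $0<r_1<r_2$ and any $Z\in\Lambda$ one has $e^{-r_1 s}\,dZ_s\ge e^{-r_2 s}\,dZ_s$ pathwise (since $Z$ is nondecreasing), so $\E_x\int_0^\infty e^{-r_1 s}\,dZ_s\ge\E_x\int_0^\infty e^{-r_2 s}\,dZ_s$, and taking the supremum over $Z$ gives $V_{r_1}(x)\ge V_{r_2}(x)$. The content is in the other two claims, and the key is to recast the optimality condition $\psi_r''(x_r^\ast)=0$ in a form that is monotone in $r$.

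First I would note that, since $\psi_r$ solves $\tfrac12\sigma^2(x)\psi_r''(x)+\mu(x)x\,\psi_r'(x)=r\psi_r(x)$ with $\psi_r'>0$, the quantity $\mu(x)x-r\psi_r(x)/\psi_r'(x)$ has the same sign as $-\psi_r''(x)$; since $x_r^\ast=\argmin\{\psi_r'(x)\}$ is the unique zero of $\psi_r''$, this quantity is positive on $(0,x_r^\ast)$ and negative on $(x_r^\ast,\infty)$, so
\[
x_r^\ast=\inf\Big\{x>0:\ \mu(x)x\le \frac{r\psi_r(x)}{\psi_r'(x)}\Big\}.
\]
Next, the identity $\tfrac{d}{dx}\big(\psi_r'/S'\big)=(\A\psi_r)m'=r\psi_r m'$ together with $\lim_{x\downarrow0}\psi_r'(x)/S'(x)=0$ — which holds because $\psi_r'/S'$ is nondecreasing and a positive limit $c>0$ would force $\psi_r(y)-\psi_r(x)\ge c\,S((x,y))\to+\infty$ as $x\downarrow0$, contradicting $\lim_{x\downarrow0}S((x,y))=+\infty$ and the finiteness of $\psi_r$ — gives $\psi_r'(x)=rS'(x)\int_0^x\psi_r(z)m'(z)\,dz$, hence
\[
\frac{r\psi_r(x)}{\psi_r'(x)}=\Big(S'(x)\int_0^x\tfrac{\psi_r(z)}{\psi_r(x)}\,m'(z)\,dz\Big)^{-1}.
\]
Writing $\tau_x:=\inf\{t\ge0:X_t=x\}$ for the unharvested process, one has $\psi_r(z)/\psi_r(x)=\E_z[e^{-r\tau_x}]$ for $z<x$ (cf.\ \cite{BS15}), which is nonincreasing in $r$ for each path; integrating against $m'$ and inverting, $r\mapsto r\psi_r(x)/\psi_r'(x)$ is nondecreasing for every fixed $x$, so the sets in the displayed characterization of $x_r^\ast$ grow with $r$ and $r\mapsto x_r^\ast$ is nonincreasing, i.e.\ increased discounting lowers the boundary.

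For the Abelian limit I would pass to $r\downarrow0$ in the optimality equation $\mu(x_r^\ast)x_r^\ast=\big(S'(x_r^\ast)\int_0^{x_r^\ast}\tfrac{\psi_r(z)}{\psi_r(x_r^\ast)}m'(z)\,dz\big)^{-1}$. Under Assumptions \ref{A:ES}--\ref{A:suff} the diffusion $X$ is recurrent on $(0,\infty)$ (the boundaries are non-attracting), so $\PP_z(\tau_x<\infty)=1$ for $z<x$, whence $\psi_r(z)/\psi_r(x)=\E_z[e^{-r\tau_x}]\uparrow1$ as $r\downarrow0$; by dominated convergence $\int_0^x\tfrac{\psi_r(z)}{\psi_r(x)}m'(z)\,dz\to m((0,x))$, and monotonicity makes this uniform for $x$ in compact subintervals of $(0,\infty)$, so $r\psi_r(x)/\psi_r'(x)\to 1/\big(S'(x)m((0,x))\big)$ uniformly on compacts. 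Since $x_r^\ast\in(\hat x_r,x_0)$ and $\hat x_r\to\hat x>0$ as $r\downarrow0$ (the maximizer of $\theta_r$ converges to that of $x\mapsto\mu(x)x$, the $\theta_r$ being dominated by the upper coercive $x\mapsto\mu(x)x$), the $x_r^\ast$ stay in a fixed compact $[\varepsilon_0,x_0]\subset(0,\infty)$. Taking $r_n\downarrow0$ with $x_{r_n}^\ast\to\xi\in[\varepsilon_0,x_0]$ and letting $n\to\infty$ in the optimality equation (the left side continuous, the right side uniformly convergent) gives $\mu(\xi)\xi=1/\big(S'(\xi)m((0,\xi))\big)$; by Lemma \ref{aux2} (equivalently the optimality conditions \eqref{opt1}) this has $b^\ast$ as its unique solution on $(0,\infty)$, so $\xi=b^\ast$, and since every subsequence has a further subsequence converging to $b^\ast$ we conclude $x_r^\ast\to b^\ast$.

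The main obstacle is the last step: the Abelian passage to the limit rests on $\psi_r(z)/\psi_r(x)\to1$, i.e.\ on the recurrence of the unharvested process — precisely what the boundary hypotheses in Assumptions \ref{A:ES}--\ref{A:suff} are designed to ensure — and it must be carried with enough \emph{uniformity} in $x$ to let $r\downarrow0$ while the argument $x_r^\ast$ is itself moving, which is why the elementary compactness $x_r^\ast\in[\varepsilon_0,x_0]$, coming from $\hat x_r\to\hat x>0$, is needed alongside it.
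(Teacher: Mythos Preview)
Your proof is correct and rests on the same key device as the paper's: the hitting-time representation $\psi_r(z)/\psi_r(x)=\E_z[e^{-r\tau_x}]$ for $z<x$, its monotonicity in $r$, and its limit $1$ as $r\downarrow 0$ by recurrence. The packaging differs. For the monotonicity of $x_r^\ast$ the paper invokes an identity from \cite{Alvarez2004} to rewrite $\psi_r(x)/\psi_r'(x)-x$ as an integral against $\theta_r$, deduces that $\psi_r'/\psi_r$ is increasing in $r$ from the hitting-time formula, and then compares across discount rates at the optimal thresholds; you instead integrate $(\psi_r'/S')'=r\psi_r m'$ directly to obtain $r\psi_r(x)/\psi_r'(x)=\big(S'(x)\int_0^x\tfrac{\psi_r(z)}{\psi_r(x)}m'(z)\,dz\big)^{-1}$ and read off its monotonicity in $r$ immediately---this is self-contained and a bit cleaner. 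For the Abelian limit the paper passes to $r\downarrow 0$ in $F_r(x):=\int_0^x\tfrac{\psi_r(z)}{\psi_r(x)}(\theta_r(z)-\theta_r(x))m'(z)\,dz$ to recover the function $f$ of Lemma~\ref{aux2}, whereas you pass to the limit in $\mu(x_r^\ast)x_r^\ast=r\psi_r(x_r^\ast)/\psi_r'(x_r^\ast)$; your version is more explicit about the compactness of $\{x_r^\ast\}$ and the uniformity (via Dini) needed to exchange limits, points the paper leaves tacit. One simplification you can make: having already shown $r\mapsto x_r^\ast$ is nonincreasing and $x_r^\ast\in(\hat x_r,x_0)$, the limit $\lim_{r\downarrow 0}x_r^\ast$ exists directly and lies in $[x_{r_0}^\ast,x_0]$ for any fixed $r_0>0$, so the detour through $\hat x_r\to\hat x$ and subsequences is unnecessary.
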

\begin{proof}
The monotonicity of the admissible harvesting strategy guarantees that increased discounting decreases the value of the optimal policy.
To see that it also accelerates harvesting by decreasing the optimal harvesting boundary we first observe that
under our assumptions the conditions of Lemma 3.1 in \cite{Alvarez2004} are met and, therefore,
\begin{align}\label{rep1}
\frac{\psi_r(x)-x\psi_r'(x)}{S'(x)}=\int_0^x\psi_r(t)\theta_r(t)m'(t)dt
\end{align}
for all $x\in \mathbb{R}_+$. Reordering terms
shows that \eqref{rep1} can be re-expressed as
\begin{align}\label{rep2}
u_r(x):=\frac{\psi_r'(x)}{\psi_r(x)}-x=\frac{S'(x)}{\psi_r'(x)}\int_0^x\psi_r(t)\theta_r(t)m'(t)dt.
\end{align}
On the other hand, if $\tau_y=\inf\{t\geq 0:X_t=y\}$ denotes the first hitting time to $y$, then the identity (cf. p. 18 in \cite{BS15})
$$
\mathbb{E}_x\left[e^{-r\tau_y};\tau_y<\infty\right]=\frac{\psi_r(x)}{\psi_r(y)}
$$
guarantees that if $\hat{r}>r$ then
$$
\frac{\psi_r(x)}{\psi_r(y)}\geq\frac{\psi_{\hat{r}}(x)}{\psi_{\hat{r}}(y)}
$$
for all $0<x <y <\infty$. Since
$$
\frac{\psi_r(x)}{\psi_r(y)} = \exp\left(-\int_{x}^{y}d\ln\psi_r(t)\right)
$$
we notice that
$$
\frac{\psi_r'(x)}{\psi_r(x)}\leq\frac{\psi_{\hat{r}}'(x)}{\psi_{\hat{r}}(x)}
$$
for all $0<x <y <\infty$. Consequently, $u_r(x)\leq u_{\hat{r}}(x)$ for all $x\in \mathbb{R}_+$.
Since $u_r(x_r^\ast)=\theta_r(x_r^\ast)$ and $\theta_r(x)\geq \theta_{\hat{r}}(x)$ for all $x\in \mathbb{R}_+$
we notice that
$$
u_{\hat{r}}(x_r^\ast)\geq \theta_r(x_r^\ast)\geq \theta_{\hat{r}}(x_r^\ast)
$$
implying that
$$
\int_0^{x_r^\ast}\psi_{\hat{r}}(t)\theta_{\hat{r}}(t)m'(t)dt\geq \theta_{\hat{r}}(x_r^\ast)\frac{\psi_{\hat{r}}'(x_r^\ast)}{S'(x_r^\ast)}
$$
and, therefore, that $x_{\hat{r}}^\ast \leq x_r^\ast$. This shows that higher discounting accelerates harvesting by decreasing the optimal threshold.

It remains to consider the limiting case where $r\downarrow 0$. To this end, consider the function
$$
F_r(x) = \int_0^x \frac{\psi_r(z)}{\psi_r(x)}(\theta_r(z)-\theta_r(x))m'(z)dz.
$$
Since
$$
\mathbb{E}_z[e^{-r\tau_x};\tau_x<\infty] = \frac{\psi_r(z)}{\psi_r(x)}
$$
for $z\leq x$ we notice by letting $r\downarrow 0$ and invoking our assumptions that
$$
 \lim_{r\downarrow 0}\frac{\psi_r(z)}{\psi_r(x)} = \mathbb{P}_z\left[\tau_x<\infty\right] = 1.
$$
Since $\lim_{r\downarrow 0}\theta_r(x)=\mu(x)$ we finally notice that
$$
\lim_{r\downarrow 0}F_r(x) = \int_0^x (\mu(z)z-\mu(x)x)m'(z)dz=f(x).
$$
The alleged claim now follows from \eqref{discopt} and Lemma \ref{aux2}.
\end{proof}

According to Lemma \ref{comparative} higher discounting accelerates harvesting and results into a lower expected asymptotic population density. Interestingly, the optimal harvesting threshold approaches the
one from the average cumulative yield setting as $r\rightarrow 0+$. It is clear that the same conclusion is not directly valid for the value of the optimal policy $V_r(x)$. However, there exists an Abelian limit connecting the value of the two seemingly different control problems. This connection is established in the following.
\begin{thm}
Under our assumptions,
$$
\lim_{r\rightarrow 0+}rV_r(x) = \mu(b^\ast)b^\ast = \ell^\ast
$$
for all $x\in \mathbb{R}_+$.
\end{thm}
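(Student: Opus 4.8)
The plan is to establish the Abelian limit $\lim_{r\downarrow 0} rV_r(x)=\ell^\ast$ by squeezing $rV_r(x)$ between two bounds that both converge to $\ell^\ast=\mu(b^\ast)b^\ast$. Throughout I will use the explicit representation \eqref{discdval} of $V_r$ and the convergence $x_r^\ast\to b^\ast$ established in Lemma \ref{comparative}.

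\emph{Upper bound.} First I would show $\limsup_{r\downarrow 0} rV_r(x)\leq \ell^\ast$. The natural route is a verification-type argument: the function $u$ from \eqref{freebdry} satisfies $u'(x)\geq 1$ and $(\mathcal{A}u)(x)\leq \ell^\ast$ by Lemma \ref{aux2}, hence $(\mathcal{A}u)(x)-ru(x)\leq \ell^\ast-ru(x)\leq \ell^\ast$ since $u\geq 0$. Applying the generalized It\^o--D\"oblin formula to $e^{-rt}u(X_t^Z)$ for an arbitrary admissible $Z$, using $u'\geq 1$ to dominate the $dZ$ term exactly as in the proof of Lemma \ref{aux1}, and taking expectations gives
\begin{align*}
r\,\mathbb{E}_x\int_0^\infty e^{-rs}dZ_s\leq r u(x)+\ell^\ast,
\end{align*}
after letting the time horizon go to infinity (the boundary term vanishes because $e^{-rt}u$ is controlled and $u$ is concave with linear growth). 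Dividing is not needed; taking the supremum over $Z$ yields $rV_r(x)\leq ru(x)+\ell^\ast$, and letting $r\downarrow 0$ gives the claimed upper bound since $ru(x)\to 0$.

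\emph{Lower bound.} For $\liminf_{r\downarrow 0} rV_r(x)\geq \ell^\ast$ I would evaluate $rV_r(x)$ along the candidate stationary regime. From \eqref{discdval}, for $x\geq x_r^\ast$ we have $rV_r(x)=rx+\theta_r(x_r^\ast)=rx+(\mu(x_r^\ast)-r)x_r^\ast$, which tends to $\mu(b^\ast)b^\ast=\ell^\ast$ as $r\downarrow 0$ by Lemma \ref{comparative} and continuity of $\mu$. For $x<x_r^\ast$ one can either use monotonicity of $V_r$ in the starting point together with the optimal policy's local-time push structure — the policy $Z^{x_r^\ast}$ brings the population to $x_r^\ast$ and the value is bounded below by running the suboptimal but admissible strategy $Z^{b^\ast}$ — or, more cleanly, invoke that $V_r$ is increasing so $V_r(x)\geq V_r(x')$ is not directly helpful; instead, lower-bound $V_r(x)$ by the value of the admissible local-time policy $Z^{b^\ast}$, namely $\mathbb{E}_x\int_0^\infty e^{-rs}dZ_s^{b^\ast}$. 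By the It\^o expansion of $Z^{b^\ast}$ as in the proof of Theorem \ref{t:main}, $\mathbb{E}_x\int_0^\infty e^{-rs}dZ_s^{b^\ast}=\mathbb{E}_x\int_0^\infty e^{-rs}\mu(X_s^{Z^{b^\ast}})X_s^{Z^{b^\ast}}\,ds+(\text{boundary terms that are }o(1/r))$, and a Tauberian/ergodic argument (the process $X^{Z^{b^\ast}}$ is ergodic with invariant law $m'/m((0,b^\ast))$, so the Laplace-transform Abelian theorem gives $r\mathbb{E}_x\int_0^\infty e^{-rs}h(X_s^{Z^{b^\ast}})\,ds\to \int h\,d\pi$) yields $\liminf_{r\downarrow 0} rV_r(x)\geq \int_0^{b^\ast}\mu(y)y\,\frac{m'(y)}{m((0,b^\ast))}\,dy=\mu(b^\ast)b^\ast=\ell^\ast$.

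\emph{Main obstacle.} The delicate point is controlling the boundary/transient contributions in the lower-bound computation — showing that $r\,\mathbb{E}_x[X_\cdot^{Z^{b^\ast}}]$-type terms and the initial jump $(x-b^\ast)^+$ contribute $o(1)$ after multiplication by $r$, and rigorously justifying the Abelian limit for the resolvent of the ergodic diffusion $X^{Z^{b^\ast}}$ (i.e.\ that $rR_r h(x)\to\int h\,d\pi$ uniformly enough on the relevant range). The boundedness of $X^{Z^{b^\ast}}$ on $(0,b^\ast]$ makes this manageable: $\mu(\cdot)\cdot$ is bounded and continuous there, so dominated convergence plus the standard Abelian theorem for Markov resolvents (cf.\ the ergodic estimates in \cite{BS15}, and the Abelian-limit framework of \cite{Kar83}) close the argument. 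Combining the two bounds gives $\lim_{r\downarrow 0} rV_r(x)=\ell^\ast=\mu(b^\ast)b^\ast$ for every $x\in\mathbb{R}_+$.
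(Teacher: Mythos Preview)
Your proposal is correct but takes a genuinely different route from the paper. The paper's proof is a direct computation from the explicit formula \eqref{discdval}: it rewrites
\[
rV_r(x)=\begin{cases}\mu(x_r^\ast)x_r^\ast + r(x-x_r^\ast),& x\geq x_r^\ast,\\[2pt] \mu(x_r^\ast)x_r^\ast - r\displaystyle\int_x^{x_r^\ast}\dfrac{\psi_r'(t)}{\psi_r'(x_r^\ast)}\,dt,& x<x_r^\ast,\end{cases}
\]
observes that the integrand lies in $[1,\psi_r'(x)/\psi_r'(x_r^\ast)]$ so the correction term is $O(r)$, and then invokes $x_r^\ast\to b^\ast$ from Lemma \ref{comparative} together with continuity of $\mu$. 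There is no squeeze, no verification inequality, and no Abelian theorem for resolvents---just the closed-form value and a limit.

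Your approach instead sandwiches $rV_r(x)$: an upper bound via It\^o applied to $e^{-rt}u(X_t^Z)$ using $u'\geq 1$ and $\mathcal{A}u\leq \ell^\ast$, and a lower bound by running the admissible but suboptimal policy $Z^{b^\ast}$ and applying the Abelian limit $rR_r h\to \int h\,d\pi$ for the ergodic reflected diffusion on $(0,b^\ast]$. This is more work but also more robust: it would go through in settings where no closed form for $V_r$ or $x_r^\ast$ is available, and it makes transparent \emph{why} the limit is $\ell^\ast$ (the discounted problem inherits the ergodic verification bound, and the ergodic optimizer is a competitor in the discounted problem). The paper's argument, by contrast, buys brevity by fully exploiting the one-dimensional structure and \eqref{discdval}. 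Your diagnosis of the only delicate point---controlling the transient and initial-jump contributions in the lower bound---is accurate, and the boundedness of $X^{Z^{b^\ast}}$ on $(0,b^\ast]$ indeed makes that step routine.
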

\begin{proof}
Utilizing the fact that $rV_r(x_r^\ast)=\mu(x_r^\ast)x_r^\ast$ and reordering terms in \eqref{discdval} yields
\begin{align*}
rV_r(x) = \begin{cases}
\mu(x_r^\ast)x_r^\ast+r(x-x_r^\ast), &x\geq x_r^\ast,\\
\mu(x_r^\ast)x_r^\ast-r\int_{x}^{x_r^\ast}\frac{\psi_r'(t)}{\psi_r'(x_r^\ast)}dt, &x<x_r^\ast.
\end{cases}
\end{align*}
Since $\psi_r'(t)/\psi_r'(x_r^\ast)\in [1,\psi_r'(x)/\psi_r'(x_r^\ast)]$ for all $t\in[x,x_r^\ast]$ we find by invoking the limiting result of Lemma \ref{comparative} and the continuity of $\mu$ that
\begin{align*}
\lim_{r\rightarrow 0+}rV_r(x) = \begin{cases}
\mu(b^\ast)b^\ast, &x\geq x_r^\ast,\\
\mu(b^\ast)b^\ast, &x<x_r^\ast.
\end{cases}
\end{align*}
This completes the proof.
\end{proof}

\section{Applications}\label{s:applications}

\subsection{Verhulst-Pearl diffusion}

Assume that the unharvested population follows the standard Verhulst-Pearl diffusion
\begin{equation}\label{VerPea}
d X_t =  \mu X_t(1-\gamma X_{t})dt +  \sigma X_t dW_t,\quad X_0=x\in \mathbb{R}_+,
\end{equation}
where $\mu>0$ is the per-capita growth rate at low densities, $1/\gamma>0$ is the carrying capacity, and $\sigma>0$ is the infinitesimal variance of fluctuations in the per-capita growth rate. In this case
$$
S'(x)=x^{-\frac{2\mu}{\sigma^2}} e^{\frac{2\mu\gamma}{\sigma^2}x}
$$
and
$$
m'(x) = \frac{2}{\sigma^2}x^{\frac{2\mu}{\sigma^2}-2} e^{-\frac{2\mu\gamma}{\sigma^2}x}.
$$
Moreover, if $\mu>\sigma^2/2$ is satisfied, then
$$
m((0,x))=\frac{2}{\sigma^2}\left(\frac{\sigma^2}{2\mu \gamma}\right)^{\frac{2\mu}{\sigma^2}-1}\left(\Gamma\left(\frac{2\mu}{\sigma^2}-1\right)-\Gamma\left(\frac{2\mu}{\sigma^2}-1,\frac{2\mu\gamma x}{\sigma^2}\right)\right).
$$
We note that in this case Assumptions \ref{A:ES}, \ref{A:suff} and the conditions of Theorem \ref{t:main_as} hold (see \cite{EHS15} for a thorough investigation of \eqref{VerPea}).
Consequently,
$$
\ell^\ast =\frac{1}{2}\frac{\sigma^2}{\gamma} \left(\frac{\sigma^2}{2\mu}\right)^{1-\frac{2\mu}{\sigma^2}}\argmax\left\{\frac{(\gamma x)^{\frac{2\mu}{\sigma^2}} e^{-\frac{2\mu\gamma}{\sigma^2}x}}{\Gamma\left(\frac{2\mu}{\sigma^2}-1\right)-\Gamma\left(\frac{2\mu}{\sigma^2}-1,\frac{2\mu\gamma x}{\sigma^2}\right)}\right\}.
$$
The optimal boundary $b^\ast$ reads as $b^\ast=\rho^\ast \gamma^{-1}$, where $\rho^\ast$ is the unique root of the equation
$$
\left(\frac{2\mu\rho^\ast}{\sigma^2}\right)^{1-\frac{2\mu}{\sigma^2}} (1-\rho^\ast) e^{\frac{2\mu\rho^\ast}{\sigma^2}}\frac{2\mu}{\sigma^2}\left(\Gamma\left(\frac{2\mu}{\sigma^2}-1\right)-
\Gamma\left(\frac{2\mu}{\sigma^2}-1,\frac{2\mu\rho^\ast}{\sigma^2}\right)\right)=1.
$$
This shows that the optimal harvesting boundary is directly proportional to the carrying capacity.

As was shown in \cite{AS98}, in this case
the harvesting boundary maximizing the expected present value of the cumulative yield constitutes the root of the equation $\psi_r''(x_r^\ast)=0$, where $r>0$ denotes the prevailing discount rate,
$$
\psi_r(x) = (\gamma x)^{\alpha_1} \hat M\left(\alpha_1,1+\alpha_1-\alpha_2,\frac{2\mu\gamma x}{\sigma^2}\right),
$$
$\hat M$ denotes the Kummer confluent hypergeometric function,
$$
\alpha_{1} := \frac{1}{2} - \frac{\mu}{\sigma^{2}} + \sqrt{\left(\frac{1}{2} - \frac{\mu}{\sigma^{2}}\right)^{2}
+ \frac{2 r}{\sigma^{2}}} > 0,
$$
and
$$
\alpha_{2} := \frac{1}{2} - \frac{\mu}{\sigma^{2}} - \sqrt{\left(\frac{1}{2} - \frac{\mu}{\sigma^{2}}\right)^{2}
+ \frac{2 r}{\sigma^{2}}} < 0.
$$
We notice again that as in the ergodic setting, the optimal threshold is directly proportional to the carrying capacity.

The optimal harvesting threshold is illustrated for two different volatilities as a function of the discount rate in Figure \ref{fig1a} under the assumptions that $\mu = 0.1,\gamma = 0.001$.
\begin{figure}[h!]
\begin{center}
\includegraphics[]{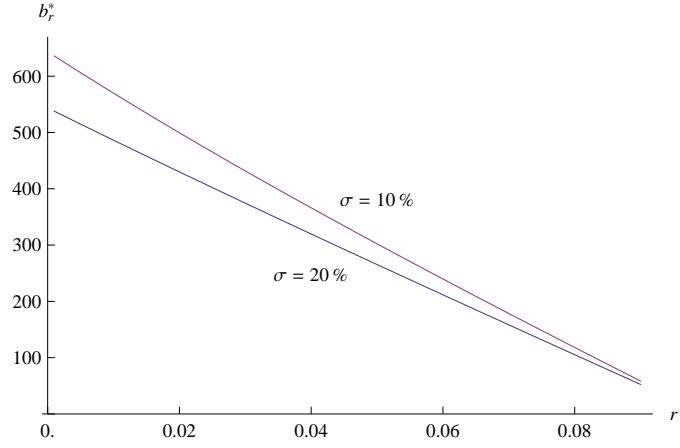}
\end{center}
\caption{\small{The optimal harvesting boundary as a function of the discount rate.}}\label{fig1a}
\end{figure}

\subsection{Logistic Diffusion}

An alternative logistic population growth model was studied in \cite{LO97} and in \cite{A00}.
The dynamics is characterized by the stochastic differential equation
\begin{equation}\label{LogDif}
dX_t=\mu X_t(1-\gamma X_t)dt + \sigma X_t (1-\gamma X_t)dW_t,\quad X_0=x\in (0,1/\gamma).
\end{equation}
As was established in \cite{LO97}, this SDE has a unique strong solution defined for all $t\geq 0$. In this case we know that
$$
S'(x)= \left(\frac{x}{1-\gamma x}\right)^{-\frac{2\mu}{\sigma^2}},
$$
and
$$
m'(x)=\frac{2}{\sigma^2}x^{\frac{2\mu}{\sigma^2}-2}(1-\gamma x)^{-\frac{2\mu}{\sigma^2}-2}.
$$
Moreover, if $\mu>\sigma^2/2$ is satisfied, then for any $x>0$
$$
m((0,x))=\frac{2}{\sigma^2}\gamma^{1-\frac{2\mu}{\sigma^2}}B\left(\gamma x, \frac{2\mu}{\sigma^2}-1,-\frac{2\mu}{\sigma^2}-1\right),
$$
where $B$ denotes the incomplete beta-function.
One can see that in this setting Assumptions \ref{A:ES}, \ref{A:suff} and the conditions of Theorem \ref{t:main_as} hold.
Consequently,
$$
\ell^\ast =\frac{1}{2}\sigma^2 \gamma\argmax\left\{\left(\frac{\gamma x}{1-\gamma x}\right)^{\frac{2\mu}{\sigma^2}}\frac{1}{B\left(\gamma x, \frac{2\mu}{\sigma^2}-1,-\frac{2\mu}{\sigma^2}-1\right)}\right\}
$$
demonstrating that the optimal harvesting boundary is directly proportional to the carrying capacity in this case as well.
Standard differentiation now shows that the harvesting threshold maximizing the expected average cumulative yield is $b^\ast = \rho^\ast \gamma^{-1}$ where $\rho^\ast$ constitutes the unique root of the equation
$$
\left(\frac{\rho^\ast}{1-\rho^\ast}\right)^{\frac{2\mu}{\sigma^2}}= \rho^\ast(1-\rho^\ast)\frac{2\mu}{\sigma^2}B\left(\rho^\ast, \frac{2\mu}{\sigma^2}-1,-\frac{2\mu}{\sigma^2}-1\right).
$$
As was shown in \cite{A00}, in this case
the harvesting boundary maximizing the expected present value of the cumulative yield is the unique solution of $\psi_r''(x_r^\ast)=0$, where
$$
\psi_r(x) = \left(\frac{\gamma x}{1 - \gamma x}\right)^{\alpha_{1}} F\left(a, b, c; -\frac{\gamma x}{1 - \gamma x}\right),
$$
$F$ is the standard hypergeometric function,
$$
a := 1 - \frac{\alpha_{2}}{2} + \frac{\alpha_{1}}{2} - \frac{1}{2}\sqrt{(\alpha_{2}^{2} -
2 \alpha_{2}(2 + \alpha_{1})) + (2-\alpha_{1})^{2}},
$$
$$
b := 1 - \frac{\alpha_{2}}{2} + \frac{\alpha_{1}}{2} + \frac{1}{2}\sqrt{(\alpha_{2}^{2} -
2 \alpha_{2}(2 + \alpha_{1})) + (2-\alpha_{1})^{2}},
$$
and
$$
c := 1 - \alpha_{2} + \alpha_{1}.
$$
We notice again that, as in the ergodic setting, the optimal threshold is directly proportional to the carrying capacity.

The optimal harvesting threshold is illustrated for two different volatilities as a function of the discount rate in Figure \ref{fig1} under the assumptions that $\mu = 0.1,\gamma = 0.001$.
\begin{figure}[h!]
\begin{center}
\includegraphics[]{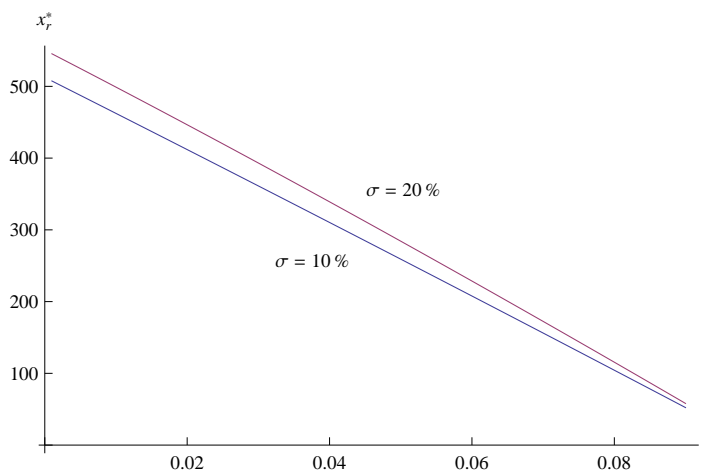}
\end{center}
\caption{\small{The optimal harvesting boundary as a function of the discount rate.}}\label{fig1}
\end{figure}

\section{Discussion}\label{s:discussion}

We investigated the optimal ergodic harvesting strategies of a population $X$ whose dynamics is given by a general one-dimensional stochastic differential equation. The theory we develop for optimal sustainable harvesting includes the risks of extinction from evironmental fluctuations (environmental stochasticity) as well as from harvesting. However, in contrast to most of the literature, we do not work with discount factors (see \cite{LES94, LES95, AS98, LO97, A00}) or maximal harvesting rates (see \cite{HNUW18}). Instead, we concentrate on policies aiming to the maximization of the average cumulative yield. We proved that the optimal policy is of the same local time push type as in the discounted setting. Since the optimal threshold at which harvesting becomes optimal is a decreasing function of the prevailing discount rate, our results unambiguously demonstrate that
policies based on ergodic (sustainable) criteria are more prudent and imply higher population densities than models subject to discounting. Our results show higher stochastic fluctuations negatively impact the population densities -- this provides rigorous mathematical support for the arguments developed in \cite{LES94} based on approximation arguments.

There are at least two directions in which our analysis could be extended. As was originally established in \cite{Kar83} in a model based on controlled Brownian motion, the value of the optimal ergodic policy is associated with the value of the finite horizon control problem
$$
J(T,x) = \sup_{Z\in \Lambda} \mathbb{E}_{x}\left[Z_T\right],
$$
where $T>0$ is a known fixed finite time horizon. It would be of interest whether an analogous limiting result connects $J(T,x)/T$ to $l^\ast$ in the general setting. 

The present analysis focuses on the harvesting of a single unstructured population. It would relevant to increasing the dimensionality of the considered model and introduce interactions into the dynamics governing the evolution of the population stock (see for example the population dynamics models from \cite{HN18, HN18a, SBA11}). In light of the studies \cite{LuOk01,AlLuOk16} this latter problem seems to be very challenging and is left for future considerations.

\bibliographystyle{amsalpha}
\bibliography{harvest}

\end{document}